\documentclass[psamsfonts]{amsart}

\usepackage{amssymb,amsfonts}
\usepackage[all,arc]{xy}
\usepackage{enumerate}
\usepackage{enumitem}
\usepackage{mathrsfs}
\usepackage{hyperref}
\usepackage{cleveref}
\usepackage{graphicx}
\usepackage{mathtools}
\usepackage[dvipsnames]{xcolor}
\usepackage[T1]{fontenc}
\usepackage[utf8]{inputenc}

\newtheorem{thm}{Theorem}[section]
\newtheorem{cor}[thm]{Corollary}
\newtheorem{prop}[thm]{Proposition}
\newtheorem{lem}[thm]{Lemma}

\newtheorem{obs}[thm]{Observation}

\theoremstyle{definition}
\newtheorem{defn}[thm]{Definition}

\newtheorem{exmp}[thm]{Example}

\newtheorem{fact}[thm]{Fact}

\theoremstyle{remark}
\newtheorem{rem}[thm]{Remark}

\makeatletter
\let\c@equation\c@thm
\makeatother
\numberwithin{equation}{section}

\usepackage{graphicx}

\title[Zariski density, critical exponents, and unitary representations]{Zariski density of discrete subgroups via critical exponents and unitary representations}
\author{Aleksander Skenderi}
\address{Department of Mathematics, University of Wisconsin-Madison}
\email{askenderi@wisc.edu}
\begin{document}
\begin{abstract}
Let $G$ be a connected real semisimple linear algebraic group with finite center and no compact factors, let $X$ denote its associated Riemannian symmetric space, and let $h_{\mathrm{vol}}(X)$ be the volume growth entropy of $X$. We show that there exists an $\epsilon = \epsilon(G) > 0$ so that the following holds: if $\Gamma < G$ is a discrete subgroup with critical exponent greater than $h_{\mathrm{vol}}(X) - \epsilon$, then $\Gamma$ is Zariski dense in $G$. If $G$ is further assumed to be isomorphic to one of the isometry groups of the real, complex, or quaternionic hyperbolic spaces, we determine the smallest possible value of the critical exponent to guarantee Zariski density of the associated subgroup (in other words, the largest possible value of $\epsilon = \epsilon(G)$). In the setting of discrete subgroups of real semisimple Lie groups with no compact factors, this generalizes Borel's Density Theorem for lattices.
\par 
The key ingredient is input from the theory of unitary representations, particularly the recent work of Benoist--Liang (which was inspired by earlier work of Benoist--Kobayashi) on general temperedness criteria for the quasi-regular representation of $L^2(G/H)$ for any closed subgroup $H$ of $G$.
\end{abstract}
\maketitle
\tableofcontents
\section{Introduction}
\subsection{Motivation and statement of the main theorem}
Let $G$ be a connected real semisimple linear algebraic group without compact factors and with finite center. Recall that a \emph{lattice} in $G$ is a discrete subgroup $\Gamma$ of $G$ such that the quotient $G/\Gamma$ admits a finite $G$-invariant measure. Equivalently, a lattice corresponds to the fundamental group of a finite volume locally symmetric space whose universal cover is isometric to the symmetric space $X$ of $G$. One of the first foundational results in the study of lattices in Lie groups is the famous density theorem of Armand Borel (\cite{Bor}, see also Theorem 3.2.5 of \cite{Zim}).
\begin{thm} [Borel's Density Theorem \cite{Bor}]
Let $G$ be a connected real semisimple algebraic group without compact factors. Let $\Gamma < G$ be a (topologically) closed subgroup so that $G/\Gamma$ admits a finite $G$-invariant measure. Then $\Gamma$ is Zariski dense in $G$.
\end{thm}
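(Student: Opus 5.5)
The plan is to follow Furstenberg's approach via Chevalley's theorem and a recurrence argument on projective space. Let $H$ denote the Zariski closure of $\Gamma$ in $G$; we want $H = G$. By Chevalley's theorem there is a finite-dimensional rational representation $\rho\colon G \to \mathrm{GL}(V)$ and a line $\ell \subset V$ such that $H$ is exactly the stabilizer of $[\ell] \in \mathbb{P}(V)$. The orbit map $gH \mapsto \rho(g)[\ell]$ then gives a $G$-equivariant measurable map $G/\Gamma \to G/H \to \mathbb{P}(V)$. Pushing forward the finite $G$-invariant probability measure on $G/\Gamma$ yields a $G$-invariant probability measure $\mu$ on $\mathbb{P}(V)$.

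The key step is Furstenberg's lemma. Since $G$ has no compact factors, it is generated by unipotent one-parameter subgroups, or more concretely by elements $g$ such that $\rho(g)$ is unipotent; in the connected semisimple case one can instead use $\mathbb{R}$-split elements. For a unipotent $u = \rho(g)$ and any $[v] \in \mathbb{P}(V)$, the iterates $u^n[v]$ converge as $n \to \pm\infty$ to a point of $\mathbb{P}(\ker(u - 1))$. This holds because $u^n v$ is a polynomial in $n$ whose leading coefficient lies in $\ker(u-1)$. Combining this with $u$-invariance of $\mu$ and Poincaré recurrence (or dominated convergence applied to $\mu = u^n_*\mu$) shows that $\mu$ is supported on $\mathbb{P}(\ker(u-1))$, so every $u$-invariant probability measure is supported on $u$-fixed points. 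Applying this to every unipotent element, and using that they generate $G$, we conclude that $\mu$ is supported on the $G$-fixed points of $\mathbb{P}(V)$.

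It then follows that $\mu$-almost every point $\rho(g)[\ell]$ is $G$-fixed, so some conjugate of $H$, and hence $H$ itself, stabilizes a $G$-fixed point. That is, $G$ fixes $[\ell]$ and therefore $H = G$. One must check that the pushforward measure genuinely charges the orbit points; it does, since it is supported on the image of the orbit map. A technical point is that $G$ may fix the line $[\ell]$ projectively but not pointwise. This is harmless, because $H$ is defined as the stabilizer of the line.

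I expect the main obstacle to be the reduction to generation by unipotents. This is exactly where the hypothesis of no compact factors is used: a compact factor would have no nontrivial unipotents and would admit invariant measures with non-fixed support. I would invoke the standard fact that in a connected semisimple group without compact factors, the normal subgroup generated by unipotent elements is all of $G$. Beyond that, the only care needed is to make the recurrence argument rigorous via weak-$*$ limits of $u^n_*\mu$.
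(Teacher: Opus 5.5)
Your argument is correct, but it is the classical Furstenberg proof (Chevalley's theorem plus invariant measures on projective space), which is not the route the paper takes.

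The paper does not prove the statement in this generality. It quotes Borel and remarks that Theorem~\ref{mainthm2} reproves it for \emph{lattices}: a lattice has $\delta(\Gamma)=h_{\mathrm{vol}}(X)$ by Leuzinger, whereas every discrete subgroup that is not Zariski dense has $\delta(\Gamma)\le M<h_{\mathrm{vol}}(X)$. That bound comes from a structural dichotomy rather than from dynamics on $\mathbb{P}(V)$. Such a $\Gamma$ lies either in a proper reductive subgroup, which preserves a proper totally geodesic $Y\subset X$, so $\delta(\Gamma)\le h_{\mathrm{vol}}(Y)$. Or it lies in a proper parabolic $Q$. In that case Benoist--Liang's identity $\delta_{G/Q}=1-1/p_{G/Q}$, the monotonicity $\delta_{G/\Gamma}\le\delta_{G/Q}$, the Einsiedler--Margulis--Venkatesh/Moore bound $p_{G/Q}<\infty$, and Quint's growth indicator together give $\delta(\Gamma)\le(1-1/p_{G/Q})h_{\mathrm{vol}}(X)$.

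Your route is elementary and uses the finite invariant measure directly. It proves the statement exactly as written, including closed non-discrete $\Gamma$ of finite covolume, which the critical-exponent argument does not address. The paper's route needs heavy representation-theoretic input, but in exchange it replaces finite covolume by an open, quantitative growth condition. So it also applies to infinite covolume discrete subgroups (for instance Corlette's examples in $\mathrm{Sp}(n,1)$), where there is no invariant probability measure on $G/\Gamma$ for your argument to push forward.

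One small cleanup in your final step: you do not need to pass through ``some conjugate of $H$''. If $\rho(g)[\ell]\in\mathbb{P}(V^G)$, then $[\ell]=\rho(g)^{-1}\rho(g)[\ell]\in\mathbb{P}(V^G)$, so $G\subseteq\mathrm{Stab}_G([\ell])=H$. Also, the intersection of the $\ker(\rho(u)-1)$ over all unipotent $u$ is a finite intersection by dimension, so it still has full $\mu$-measure.
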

Lattices in Lie groups, particular of higher rank, are by now quite well-understood thanks to the celebrated superrigidity and arithmeticity theorems of Margulis \cite{Mar} as well as the works of many other mathematicians on the geometric and topological properties of their associated locally symmetric spaces; see for instance \cite{ABBGNRS}, \cite{FHR1}, \cite{FHR2}, \cite{FMW}, to name a few such works. As a result, a major focus of work in recent years has been on the study of infinite covolume discrete subgroups of Lie groups, with a particular emphasis of many mathematicians being the Anosov groups, as well as their numerous generalizations (such as relatively Anosov groups and transverse groups, among others).
\par 
However, very few general results are known about \emph{arbitrary} infinite covolume discrete subgroups of Lie groups, with a classification of such subgroups (if at all possible), being seemingly far from reach. Indeed, to the best of the author's knowledge, the only such result is the recent spectacular breakthrough of Frączyk--Gelander \cite{FG}, which states the following: if $G$ is a higher rank semisimple Lie group with Kazhdan's property (T) and $\Gamma < G$ is discrete, then the locally symmetric space $\Gamma \backslash X = \Gamma \backslash G / K$ has infinite volume if and only if it has infinite injectivity radius.
\par 
Given how fundamental Borel's Density Theorem is in the study of lattices in Lie groups, the author was led to wonder to what extent one can expect a generalization of this theorem to hold in the setting of infinite covolume discrete subgroups. This is precisely the motivation behind the present work. Of course, there are infinite covolume discrete subgroups which are not Zariski dense. Therefore we are in search of a general criterion to guarantee when a discrete subgroup is Zariski dense, which, in particular, recovers Borel's renowned result in the case when the discrete subgroup is a lattice. Our criterion will involve the \emph{critical exponent} of $\Gamma$. 
\par 
Let $G$ be as above, let $K < G$ be a maximal compact subgroup of $G$, and let $X := G/K$ denote the symmetric space of $G$. Fix a $G$-invariant metric $d_{X}$ induced from a Riemannian metric on $X$; for instance, (see section \ref{Liegroupsandunitaryreps}) fixing a $K$-invariant norm $|| \cdot ||$ on $\mathfrak{a}$ induced from the Killing form, define $d_{X}(gK, hK) = ||\kappa(g^{-1}h)||$ for all $g,h \in G$, where $\kappa : G \rightarrow \mathfrak{a}^{+}$ denotes the Cartan projection. Fix a basepoint $o \in X$. The critical exponent of $\Gamma$, denoted by $\delta(\Gamma)$, is the abscissa of convergence of the \emph{Poincar\'e series}
\begin{align*}
Q_{\Gamma}(s) := \sum_{\gamma \in \Gamma} e^{-sd_{X}(o,\gamma o)},
\end{align*} that is,
\begin{align*}
\delta(\Gamma) := \inf \{s > 0 : Q_{\Gamma}(s) < \infty \} \in [0, \infty].
\end{align*} Equivalently, the critical exponent of $\Gamma$ is given by 
\begin{align*}
\delta(\Gamma) = \limsup_{T \to \infty} \frac{1}{T} \log \# \{ \gamma \in \Gamma : d_{X}(o, \gamma o) \leq T \},
\end{align*} and thus it measures the exponential growth rate of the $\Gamma$-orbits in $X$. Since $\Gamma$ acts on $X$ by isometries, the critical exponent is independent of the choice of basepoint $o \in X$, hence is well-defined. When $\Gamma$ is a lattice in $G$, its critical exponent coincides with the \emph{volume growth entropy of} $X$ (see \cite{Leu}), which is defined by 
\begin{align*}
h_{\mathrm{vol}}(X) := \lim_{R \to \infty} \frac{1}{R} \log \mathrm{vol}(B_{R}(o)).
\end{align*} Here $B_{R}(o)$ is the ball of radius $R$ centered at $o \in X$ and $\mathrm{vol}$ denotes the Riemannian volume on $X$. We remark that the volume growth entropy is independent of the choice of basepoint $o \in X$. If $\Gamma < G$ is an arbitrary discrete subgroup, then $\delta(\Gamma) \leq h_{\mathrm{vol}}(X)$ and understanding the values $\delta(\Gamma)$ can take is a question which has attracted much interest; see for instance \cite{Cor}, \cite{DL}, \cite{Leu}, \cite{Q2}, and \cite{S1}. We will come back to this point later. We are now in a position to state our main result.
\begin{thm} [Theorem \ref{mainthm2}]
\label{mainthm1}
Let $G$ be a connected real semisimple linear algebraic group without compact factors and with finite center. There exists a constant $\epsilon = \epsilon(G) > 0$ so that the following holds: if $\Gamma < G$ is a discrete subgroup such that
\begin{align*}
\delta(\Gamma) > h_{\mathrm{vol}}(X) - \epsilon,
\end{align*} then $\Gamma$ is Zariski dense in $G$.
\end{thm}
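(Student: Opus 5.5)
The plan is to argue by contrapositive. Suppose $\Gamma$ is not Zariski dense, and let $H$ be the identity component (in the real topology) of the group of real points of the Zariski closure of $\Gamma$. Then $H$ is a proper closed subgroup of $G$, and $\Gamma \cap H$ has finite index in $\Gamma$, so $\delta(\Gamma) = \delta(\Gamma\cap H)$. We will bound $\delta(\Gamma)$ above by a quantity determined by $H$ (indeed by its Lie algebra $\mathfrak{h}$), and show that this quantity is at most $h_{\mathrm{vol}}(X) - \epsilon$ for a uniform $\epsilon(G) > 0$ independent of $H$.

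The bound comes from unitary representations, in the Benoist--Liang framework. Define $\rho_{\mathfrak{h}}$ on $\mathfrak{a}$ by taking the maximum over conjugates of $\mathfrak{h}$ of half the trace of $|\mathrm{ad}|$ on $\mathfrak{h}$. Benoist--Liang give temperedness criteria in terms of the comparison $\rho_{\mathfrak{h}} \le \rho_{\mathfrak{g}/\mathfrak{h}}$. More usefully here, their quantitative version controls the decay of matrix coefficients of $L^2(G/H)$ by a power of the Harish-Chandra function. Write $\rho$ for the half-sum of positive roots, so that $h_{\mathrm{vol}}(X) = 2\|\rho\|$ in the dual norm. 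The key steps, in order, are as follows.

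\begin{enumerate}
\item Show that $\delta(\Gamma)$ is bounded by a growth quantity for $H$. Since $\Gamma\cap H$ is discrete in $H$, the orbit counting in $X$ is dominated by volume growth of $H$-balls measured with $d_X$. The latter is $\limsup \frac1T\log \mathrm{vol}_H\{h: \|\kappa(h)\|\le T\}$, which by Cartan decomposition of $H$ (for $H$ reductive) or a direct Haar-measure estimate (in general) is at most $\max_{\|v\|=1} 2\rho_{\mathfrak{h}}(v)$.
\item Prove the strict inequality $\max_{\|v\|= 1} 2\rho_{\mathfrak{h}}(v) < 2\|\rho\|$ for every proper subalgebra $\mathfrak{h}$.
\item Obtain uniformity. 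Only finitely many functions $\rho_{\mathfrak{h}}$ arise up to the Weyl group, because $\rho_{\mathfrak{h}}$ is a piecewise-linear function determined by the $\mathfrak{a}$-weights of $\mathfrak{h}$ with multiplicities, and there are finitely many such data. So the gap $\epsilon(G)$ is attained as a minimum over finitely many positive numbers.
\end{enumerate}

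Alternatively, one could invoke Benoist--Liang's bound on matrix coefficients directly. If $L^2(G/H)$ is strongly $L^p$ for some $p$, a standard argument gives $\delta(\Gamma)\le h_{\mathrm{vol}}(X)(1-1/p')$-type bounds. Here the mechanism is that $\ell^2(\Gamma)$-induced representations and $L^2(G/\Gamma)$ weakly contain $L^2(G/H)$-like pieces, combined with the Corlette/Leuzinger relation between $\delta(\Gamma)$ and the bottom of the spectrum $\lambda_0 = \|\rho\|^2 - (\delta-\|\rho\|)^2$ in rank one and its higher-rank analogues. I would use whichever of the two routes fits cleanest with results stated earlier in the paper.

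The main obstacle is step 2: showing strict inequality for every proper $\mathfrak{h}$, including non-reductive ones such as parabolic subalgebras. For a minimal parabolic $\mathfrak{p}=\mathfrak{m}+\mathfrak{a}+\mathfrak{n}$, the function $\rho_{\mathfrak{p}}$ equals $\rho$ on the positive chamber, so naive volume growth gives no gap. There one must instead use that $\Gamma$ discrete in a solvable-by-compact or amenable-type group has orbit growth governed by the unipotent part only. More precisely, discrete subgroups of $P$ cannot see the full exponential volume growth coming from $AN$ (the horospherical contribution has distortion), so step 1 should be refined to bound $\delta$ by the growth of $\Gamma$ in the reductive Levi quotient together with polynomially distorted unipotent contributions. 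I would handle this by replacing $H$ with the Zariski closure's Levi decomposition $L\ltimes U$. I would then show that $\delta(\Gamma)\le \max_{\|v\|=1}(\rho_{\mathfrak{l}}+\rho_{\mathfrak{u}}|\ldots)$ with a strict loss, checking strictness case by case on maximal proper subalgebras (maximal reductive ones and maximal parabolics), and I expect this to be where most of the work lies.
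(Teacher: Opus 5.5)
\textbf{There is a genuine gap: the non-reductive case is not handled.} Your reduction to the identity component $H$ of the real points of the Zariski closure, and the passage to the finite-index subgroup $\Gamma\cap H$, are fine. The gap is the case that carries the whole theorem: $H$ is not reductive, so $\Gamma$ lies in a proper parabolic $Q$.

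Your main route fails there, as you notice yourself. Step~1 bounds $\delta(\Gamma)$ by the growth of \emph{left Haar} measure of $d_X$-balls in $H$. For $H=P$ that growth is already $h_{\mathrm{vol}}(X)$, so Step~2 is false as stated. With your normalization $\rho_{\mathfrak h}=\tfrac12\mathrm{tr}|\mathrm{ad}_{\mathfrak h}|$, Step~2 is false even for reductive $\mathfrak h$. The Haar growth of a reductive $H$ is $\max_{\|v\|=1}\rho_{\mathfrak h}(v)$, not $\max 2\rho_{\mathfrak h}(v)$. For $\mathfrak{so}(n-1,1)\subset\mathfrak{so}(n,1)$ with $n\ge 3$, one gets $\max 2\rho_{\mathfrak h}=2(n-2)\ge n-1=2\|\rho\|$ in the curvature $-1$ normalization.

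The repair you sketch rests on a misconception. Along unipotent one-parameter subgroups, $d_X(o,uo)$ grows only logarithmically. So the polynomial growth of $\Gamma\cap U$ becomes \emph{exponential} growth in $X$, at a rate comparable to $h_{\mathrm{vol}}(X)$: a lattice in $N\cong\mathbb R^{n-1}<\mathrm{SO}(n,1)$ has $\delta=(n-1)/2=h_{\mathrm{vol}}(X)/2$. Moreover, the image of $\Gamma$ in the Levi quotient need not be discrete, so there is no ``growth of $\Gamma$ in the Levi'' to bound. The inequality ending in ``$\ldots$'' and the promised case-by-case check are not an argument.

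Your alternative route names the right kind of target inequality, but its mechanism points the wrong way. Weak containment of $L^2(G/H)$-pieces in $L^2(G/\Gamma)$ transfers decay \emph{from} $L^2(G/\Gamma)$ \emph{to} $L^2(G/H)$. You need the converse. Using the bottom of the spectrum would also require a lower bound on $\lambda_0(\Gamma\backslash X)$ for $\Gamma<Q$, which is precisely the missing spectral input.

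\textbf{The missing idea and how the paper supplies it.} What prevents a discrete subgroup of a non-unimodular $Q$ from exploiting the Haar volume of $Q$ is that the relevant measure is the symmetric measure $\nu_Q=(\det\mathrm{Ad}_Q)^{1/2}dq$. This is built into Benoist--Liang's $\delta_{G/Q}$. The paper uses three ingredients:
\begin{enumerate}
\item[(i)] The monotonicity $\delta_{G/\Gamma}\le\delta_{G/Q}$ for $\Gamma<Q$ (Lemma~\ref{comparisonofcoeffdecay} with Theorem~\ref{equalityofexponents}). This is a domination of coefficients of $L^2(G/\Gamma)$ by those of $L^2(G/Q)$.
\item[(ii)] The spectral input $\delta_{G/Q}=1-1/p_{G/Q}<1$, obtained from Einsiedler--Margulis--Venkatesh and Moore via Proposition~\ref{semisimplealmostL2m} and Lemma~\ref{almosttotallyequiv}. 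In real rank one this is just temperedness of $L^2(G/P)$, which gives $\delta(\Gamma)\le h_{\mathrm{vol}}(X)/2$.
\item[(iii)] Proposition~\ref{relationwithgrowthind}, together with Quint's $\delta(\Gamma)=\max_{\|v\|=1}\psi_\Gamma(v)$ and $2\rho(v)\le h_{\mathrm{vol}}(X)$. This yields $\delta(\Gamma)\le(1-1/p_{G/Q})\,h_{\mathrm{vol}}(X)$, uniformly over the finitely many standard parabolics.
\end{enumerate}
The reductive case is handled by the invariant totally geodesic subspace.

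\textbf{Repairing your packing argument.} If you want to stay close to your Step~1, count with $\nu_H$ instead of Haar measure. For a small neighbourhood $V$ of $e$, the sets $\gamma V$ (respectively $V\gamma$) are pairwise disjoint and lie in a slightly larger ball. Also $\nu_H(\gamma V)+\nu_H(V\gamma)\asymp\Delta_H(\gamma)^{1/2}+\Delta_H(\gamma)^{-1/2}\ge 2$. Hence $\delta(\Gamma)$ is at most the $\nu_H$-growth rate of $d_X$-balls; for $P$ in real rank one this rate is $h_{\mathrm{vol}}(X)/2$. You would still have to prove that this rate is $<h_{\mathrm{vol}}(X)$ for every proper standard parabolic, and that is the real content.

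\textbf{A caution about the scalar-exponent route.} Suppose $G$ is not simple and $Q$ contains an entire simple factor $G_1$. Then $G_1$ acts trivially on $L^2(G/Q)$, so $p_{G/Q}=\infty$. The product argument in Proposition~\ref{semisimplealmostL2m} only works when $Q$ is proper in every factor. In that situation one must keep track of directions in $\mathfrak a$ rather than use a single exponent.
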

When $G$ has real rank one, we are able to give a more precise statement; see Theorem \ref{realrankoneversion}. Let us make a couple of comments on the content of this theorem.
\begin{rem}
As mentioned previously, when $\Gamma < G$ is a lattice, we have $\delta(\Gamma) = h_{\mathrm{vol}}(X)$, hence the hypothesis of the theorem is immediately satisfied and we recover, with a new proof, Borel's Density Theorem (for lattices).
\end{rem}
\begin{rem}
When $G$ has Kazhdan's property (T), that is, when $G$ has no simple factors locally isomorphic to SO$(n,1)$ or SU$(n,1)$, the Corlette--Leuzinger gap phenomenon \cite{Cor}, \cite{Leu} (see also related work of Quint \cite{Q2}) states that there exists $\epsilon_{0} = \epsilon_{0}(G) > 0$ (which is not necessarily the $\epsilon$ of Theorem \ref{mainthm1}! See the discussion following Theorem \ref{realrankoneversion}.) so that any infinite covolume discrete subgroup $\Gamma$ of $G$ has critical exponent $\delta(\Gamma) \leq h_{\mathrm{vol}}(X) - \epsilon_{0}$. In other words, in the property (T) case only lattices have critical exponent larger than $h_{\mathrm{vol}}(X) - \epsilon_{0}$, and so in this setting a stronger conclusion than that of Theorem \ref{mainthm1} is true. We emphasize however that the proof of our result does $\textbf{not employ}$ the theorems of Corlette and Leuzinger. Indeed, the proof of Theorem \ref{mainthm1} applies at once both to groups with and without property (T). We do however essentially rely on spectral gap arguments and tools from the theory of unitary representations, as we now explain.
\end{rem}
\subsection{Brief sketch of the proof}
\label{ideasofproof}
The idea of the proof is to show that there is some uniform constant $M < h_{\mathrm{vol}}(X)$, depending only on the ambient Lie group $G$, so that, if $\Gamma < G$ is \emph{not} Zariski dense, then $\delta(\Gamma) \leq M$. If $\Gamma$ is not Zariski dense, then it is contained in a proper closed connected Lie subgroup $H$ of $G$. If $H$ is reductive, then it preserves a proper totally geodesic Riemannian symmetric subspace $Y$ of $X$, and therefore $\delta(\Gamma) \leq h_{\mathrm{vol}}(Y) < h_{\mathrm{vol}}(X)$. But if $H$ has a non-trivial unipotent radical, then it may act in quite a complicated way on the symmetric space $X$. In this case however, $H$ is a subgroup of a \emph{proper} parabolic subgroup $Q$ of $G$, and the idea now is to use the recent work of Benoist--Liang \cite{BL}, as well as a spectral gap result inspired by work of Einsiedler--Margulis--Venkatesh \cite{EMV} and Moore \cite{Mo} (see Proposition \ref{semisimplealmostL2m}), to show that there is a uniform upper bound on the critical exponents of discrete subgroups of proper parabolic subgroups of $G$.
\subsection{Organization of the paper}
In section \ref{Liegroupsandunitaryreps}, we recall some necessary background on the structure theory of semisimple Lie groups, as well as some important facts from the theory of unitary representations. In section \ref{BackgrounonBenoist--Liang}, we briefly survey the recent work of Benoist--Liang \cite{BL}, primarily focusing on their results that will be needed in the proof of our main theorem. In section \ref{establishingmainthm}, we prove Theorem \ref{mainthm1}, as well as a more quantitative result in real rank one, and discuss some other quick applications of the main result.
\subsection*{Acknowledgements}
I want to thank my advisors Andrew Zimmer and Sebastian Hurtado-Salazar, as well as Mikołaj Frączyk, for many helpful discussions and suggestions. I am especially grateful to Mikołaj for his generous help. His numerous suggestions and answers to my questions were very important at several stages of this work. In particular, Mikołaj suggested I look into the work of Benoist--Kobayashi (this later led me to study the paper of Benoist--Liang, which plays an essential role in the present paper), and answered several of my questions about unitary representations. 
\par 
This material is based upon work supported by the National Science Foundation under Grant No. DMS-2230900.

\section{Semisimple Lie groups and their unitary representations}
\label{Liegroupsandunitaryreps}
\subsection{Basic structure theory of semisimple Lie groups}
Let $G$ be a connected real semisimple linear algebraic group without compact factors and with finite center and let $\mathfrak{g}$ denote its Lie algebra. Let $b$ denote the Killing form of $\mathfrak{g}$ and fix a Cartan involution $\tau$ of $\mathfrak{g}$; that is, an involution of $\mathfrak{g}$ for which the bilinear pairing $\langle \cdot, \cdot \rangle$ on $\mathfrak{g}$ defined by $\langle X, Y \rangle := -b(X, \tau (Y))$ is an inner product. Then $\mathfrak{g}$ decomposes as $\mathfrak{g} = \mathfrak{k} \oplus \mathfrak{p}$, where $\mathfrak{k}$ and $\mathfrak{p}$ are the $1$ and $-1$ eigenspaces of $\tau$. The subalgebra $\mathfrak{k}$ is a maximal compact Lie subalgebra of $\mathfrak{g}$ and we denote by $K < G$ the maximal compact Lie subgroup of $G$ whose Lie algebra is $\mathfrak{k}$.

Fix a maximal abelian subspace $\mathfrak{a} \subset \mathfrak{p}$, known as a \emph{Cartan subspace}, which is unique up to conjugation. The Lie algebra $\mathfrak{g}$ then decomposes as 
\begin{align*}
    \mathfrak{g} = \mathfrak{g}_{0} \oplus \bigoplus_{\alpha \in \Sigma} \mathfrak{g}_{\alpha},
\end{align*} which is called the \emph{restricted root space decomposition} associated to $\mathfrak{a}$; in this decomposition, for $\alpha \in \mathfrak{a}^{*}$, we define
\begin{align*}
    \mathfrak{g}_{\alpha} := \{X \in \mathfrak{g} : [H,X] = \alpha (H)X \ \ \mathrm{for \ all} \ H \in \mathfrak{a} \},
\end{align*} and call
\begin{align*}
    \Sigma := \{ \alpha \in \mathfrak{a}^{*} \smallsetminus \{0\} : \mathfrak{g}_{\alpha} \neq 0 \}
\end{align*} the set of \emph{restricted roots}. Now fix an element $H_{0} \in \mathfrak{a}$ so that $\alpha (H_{0}) \neq 0$ for all $\alpha \in \Sigma$, and let
\begin{align*}
\Sigma^{+} := \{ \alpha \in \Sigma : \alpha (H_{0}) > 0 \} \ \ \mathrm{and} \ \ \Sigma^{-} := - \Sigma^{+}.
\end{align*} Notice that $\Sigma = \Sigma^{+} \sqcup \Sigma^{-}$. We write $\Delta \subset \Sigma^{+}$ for the set of \emph{simple restricted roots}, which, by definition, consists of all the elements of $\Sigma^{+}$ which cannot be written as a non-trivial positive integer linear combination of elements in $\Sigma^{+}$. As $\Sigma$ is an abstract root system on $\mathfrak{a}^{*}$, it follows that $\Delta$ is a basis of $\mathfrak{a}^{*}$ and every $\alpha \in \Sigma^{+}$ is a non-negative integral linear combination of elements in $\Delta$. See for instance Chapter II of \cite{Kn1} for more details.
\subsubsection{The Weyl group, Cartan projection, and Jordan projection}
 The \emph{Weyl group} of $\mathfrak{a}$ is given by $\mathcal{W} := N_{K}(\mathfrak{a}) / Z_{K}(\mathfrak{a})$, where $N_{K}(\mathfrak{a}) \subset K$ is the normalizer of $\mathfrak{a}$ in $K$ and $Z_{K}(\mathfrak{a}) \subset K$ is the centralizer of $\mathfrak{a}$ in $K$. The Weyl group is a finite group generated by reflections of $\mathfrak{a}$ (with respect to the inner product $\langle \cdot, \cdot \rangle$) about the kernels of the simple restricted roots in $\Delta$. Hence, $\mathcal{W}$ acts transitively on the set of \emph{Weyl chambers}, which are the closures of the connected components of 
\begin{align*}
    \mathfrak{a} - \bigcup_{\alpha \in \Sigma} \ker \alpha.
\end{align*} We call the Weyl chamber 
\begin{align*}
    \mathfrak{a}^{+} := \{X \in \mathfrak{a} : \alpha (X) \geq 0 \ \ \mathrm{for \ all} \ \alpha \in \Delta \},
\end{align*} the \emph{positive Weyl chamber}. We set $A := \exp (\mathfrak{a})$, $A^{+} := \exp (\mathfrak{a}^{+})$, and $\mathfrak{a}^{++} = \mathrm{int}(\mathfrak{a}^{+})$. Let $\kappa : G \rightarrow \mathfrak{a}^{+}$ denote the \emph{Cartan projection}, that is $\kappa (g) \in \mathfrak{a}^{+}$ is the unique element so that 
\begin{align*}
    g = k \exp (\kappa (g)) \ell
\end{align*} for some $k, \ell \in K$. We note that $k, \ell \in K$ need not be unique. Such a decomposition of $g \in G$ is called a $KA^{+}K$ decomposition (see Theorem $7.39$ of \cite{Kn1}). Using the Cartan projection, we can define the map $\lambda : G \rightarrow \mathfrak{a}^{+}$ known as the \emph{Jordan projection} by 
\begin{align*}
\lambda(g) := \lim_{n \to \infty} \frac{\kappa(g^{n})}{n}.
\end{align*} An element $g \in G$ is said to be $\emph{loxodromic}$ if $\lambda(g) \in \mathfrak{a}^{++}$. We denote by 
\begin{align*}
\rho = \frac{1}{2} \sum_{\alpha \in \Sigma^{+}} (\dim \mathfrak{g}_{\alpha}) \alpha \in \mathfrak{a}^{*},
\end{align*} the half-sum of the positive roots, counted with multiplicity, which appears frequently in the study of decay properties of unitary representations. 
\subsubsection{Parabolic subgroups and the Levi decomposition}
\label{parabolicandlevi}
Nice references for the material in this section include section 5 of Chapter V of \cite{Kn2} and section 30 of \cite{Hum}.
\par 
The normalizer in $G$ of the nilpotent subalgebra
\begin{align*}
\mathfrak{n} = \bigoplus_{\alpha \in \Sigma^{+}} \mathfrak{g}_{\alpha}
\end{align*} is the \emph{standard minimal parabolic subgroup}, denoted by $P$. By a \emph{minimal parabolic subgroup} we mean a conjugate of $P$. A subgroup $Q < G$ is called a \emph{parabolic subgroup} if it contains a minimal parabolic subgroup. There are finitely many conjugacy classes of parabolic subgroups of $G$.
\par 
Consider now an \emph{arbitrary} connected real algebraic group $H$. Then $H$ admits a \emph{Levi decomposition}
\begin{align*}
H = L \ltimes R_{u}(H),
\end{align*} where $L$ is a reductive subgroup of $H$ known as the \emph{Levi subgroup} and $R_{u}(H)$ is the unipotent radical of $H$. If the unipotent radical of $H$ is trivial, then $H$ is reductive. Otherwise $H$ is contained in a parabolic subgroup of $G$ (see Theorem 30.4 (a) of \cite{Hum}).

\subsection{Some ingredients from the theory of unitary representations}
The general references for this section include \cite{BdV} and \cite{Kn2}.
\par 
Unless otherwise specified, we will assume in this section that $G$ is a locally compact second countable topological group. A \emph{unitary representation} of $G$ is a pair $(\pi, \mathcal{H})$ consisting of a complex Hilbert space $\mathcal{H}$ and a group homomorphism $\pi : G \rightarrow \mathcal{U}(\mathcal{H})$ from $G$ to the group $\mathcal{U}(\mathcal{H})$ of unitary operators of $\mathcal{H}$ which is \emph{strongly continuous} in the sense that, for any $v \in \mathcal{H}$, the map $G \rightarrow \mathcal{H}$ given by $g \mapsto \pi(g)v$ is continuous. A \emph{matrix coefficient} of $\pi$ is a map $\phi_{v_{1}, v_{2}} : G \rightarrow \mathbb{C}$ given by
\begin{align*}
\phi_{v_{1}, v_{2}}(g) := \langle \pi(g)v_{1}, v_{2} \rangle,
\end{align*} where $v_{1}, v_{2} \in \mathcal{H}$. The strong continuity of $\pi$ ensures that matrix coefficients are bounded continuous functions on $G$.
\par 
\begin{exmp} [Left regular representation and quasi-regular representation]
The prototypical example of a unitary representation of $G$ is the so-called \emph{left regular representation} $\lambda_{G}$ of $G$ on the space $L^2(G)$ of (equivalence classes of) square-integrable functions on $G$. Let $dx$ denote a left Haar measure on $G$ and let $\langle \cdot, \cdot \rangle$ denote the usual $L^2$ scalar product of complex-valued functions on $G$:
\begin{align*}
\langle f_{1}, f_{2} \rangle := \int_{G} f_{1}(x) \overline{f_{2}(x)} dx.
\end{align*} The left regular representation is then the unitary representation $\lambda_{G} : G \rightarrow \mathcal{U}(L^2(G))$ given by
\begin{align}
\label{lefttranslation}
\lambda_{G}(g)f : x \mapsto f(g^{-1}x), \ \ \mathrm{for} \ \ f \in L^2(G).
\end{align} If $H < G$ is a closed subgroup of $G$, there is a similarly defined representation of $G$ on the space $L^2(G/H)$, denoted $\lambda_{G/H}$, and called the \emph{quasi-regular representation}. Note that, while the homogeneous space $G/H$ always admits a $G$-quasi-invariant Radon measure (see Lemma B.1.3 of \cite{BdV}), it might not admit a $G$-invariant measure (this occurs if and only if the restriction of the modular function of $G$ to $H$ coincides with the modular function of $H$). Therefore it is not sufficient to simply perform left translation by group elements of $G$, as in (\ref{lefttranslation}); one also needs to include a ``scaling factor,'' coming from the Radon--Nikodym derivative of the push-forward of the quasi-invariant measure with respect to itself, to ensure that one indeed obtains a unitary representation.

We remark that all the standard concepts to be introduced (temperedness, being almost $L^p$, etc.) are independent of the choice of the $G$-quasi-invariant Radon measure.
\end{exmp}
\begin{defn} [Weak Containment]
Let $(\sigma, \mathcal{V})$ and $(\pi, \mathcal{H})$ be two unitary representations of $G$. We say that $\sigma$ is \emph{weakly contained in} $\pi$ (written $\sigma \prec \pi$) if every diagonal matrix coefficient $\langle \sigma(\cdot) v, v \rangle$, $v \in \mathcal{V}$, can be approximated uniformly on compact sets by convex combinations of diagonal matrix coefficients of $\pi$.
\end{defn}
Intuitively, if $\sigma \prec \pi$ and if we have information on the decay of matrix coefficients of $\pi$, then we have information on the decay of matrix coefficients of $\sigma$. Since many spectral properties of unitary representations are phrased in terms of decay of matrix coefficients, weak containment is an extremely useful property when one has spectral information about one unitary representation and wants to obtain such information about another unitary representation.
\par 
Now let $G$ be a noncompact real semisimple algebraic group. The following definition, due to Harish-Chandra, plays a fundamental role in the study of harmonic analysis on semisimple Lie groups.
\begin{defn} [Tempered representation]
A unitary representation $(\pi, \mathcal{H})$ of $G$ is called \emph{tempered} if it is weakly contained in the left regular representation $(\lambda_{G}, L^2(G))$ of $G$.
\end{defn}
\begin{fact}
\label{tempforminparabolic}
Let $G$ be a noncompact real semisimple algebraic group and let $P < G$ be a minimal parabolic subgroup. Then the representation $(\lambda_{G/P}, L^2(G/P))$ of $G$ is tempered. This is well-known but since it will be useful in what follows (see Theorem \ref{realrankoneversion}), we roughly explain why this is the case. For more details, see for instance sections 3.1 and 3.2 of \cite{BL}.
\par 
Recall that $P$, being a minimal parabolic subgroup, is amenable. This is equivalent to the trivial representation $\mathbf{1}_{P}$ of $P$ being weakly contained in the regular representation of $P$. A process known as (unitary) induction preserves weak containment. Inducing the trivial representation $\mathbf{1}_{P}$ from $P$ to $G$ yields the quasi-regular representation $\lambda_{G/P}$ of $G$, while inducing the regular representation $\lambda_{P}$ of $P$ gives the regular representation $\lambda_{G}$ of $G$. Hence $\lambda_{G/P} \prec \lambda_{G}$, as desired.
\end{fact}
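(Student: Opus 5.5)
The goal is $\lambda_{G/P} \prec \lambda_G$, and I will get it by writing both representations as induced representations from $P$ and applying the fact that induction preserves weak containment. The steps are as follows.

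First, $P$ is amenable. Write the Langlands decomposition $P = MAN$. Here $M = Z_K(\mathfrak{a})$ is compact, $A$ is abelian, and $N = \exp(\mathfrak{n})$ is nilpotent. So $P$ is a compact extension of the solvable group $AN$, which is normalized by $M$. Solvable groups are amenable, and extensions of amenable groups by amenable groups are amenable. Hence $P$ is amenable. By Hulanicki--Reiter (see Appendix G of \cite{BdV}), this is equivalent to $\mathbf{1}_P \prec \lambda_P$.

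Second, I identify the induced representations. By the standard realization of induction on sections (Appendix E of \cite{BdV}), $\mathrm{Ind}_P^G \mathbf{1}_P$ is the space of square-integrable functions on $G/P$, twisted by the square root of the Radon--Nikodym cocycle of a quasi-invariant measure. This is exactly $\lambda_{G/P}$ as described in the example above. Induction in stages, or directly Mackey's computation, gives $\mathrm{Ind}_P^G \lambda_P = \mathrm{Ind}_P^G \mathrm{Ind}_{\{e\}}^P \mathbf{1} \cong \mathrm{Ind}_{\{e\}}^G \mathbf{1} = \lambda_G$.

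Third, I apply continuity of induction with respect to weak containment (Fell's theorem, Theorem F.3.5 of \cite{BdV}): if $\sigma \prec \pi$ as representations of a closed subgroup $H$, then $\mathrm{Ind}_H^G \sigma \prec \mathrm{Ind}_H^G \pi$. Taking $H = P$, $\sigma = \mathbf{1}_P$ and $\pi = \lambda_P$ yields $\lambda_{G/P} \prec \lambda_G$, that is, $\lambda_{G/P}$ is tempered.

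The main obstacle is the bookkeeping in the second step. One has to check that the modular-function twist in the definition of induced representations matches the scaling factor in $\lambda_{G/P}$; since $P$ is not unimodular, $G/P$ carries no invariant measure, so this twist is genuinely present. I would handle it by citing the explicit model of $\mathrm{Ind}$ in \cite{BdV} and noting that the unitary equivalence class is independent of the chosen quasi-invariant measure.

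As a sanity check, one can also argue directly. The $K$-invariant vector $\mathbf{1}$ in $L^2(G/P)$ with $K$-invariant measure has matrix coefficient equal to Harish-Chandra's $\Xi$ function. Since $\Xi \in L^{2+\varepsilon}(G)$ for all $\varepsilon>0$, Cowling--Haagerup--Howe implies temperedness on the cyclic subspace, and hence on all of $L^2(G/P)$, because $\mathbf{1}$ is cyclic there.
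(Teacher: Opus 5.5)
Your proposal is correct and follows the same route as the paper. Amenability of $P$ gives $\mathbf{1}_P \prec \lambda_P$, and then continuity of induction, together with $\mathrm{Ind}_P^G \mathbf{1}_P \cong \lambda_{G/P}$ and $\mathrm{Ind}_P^G \lambda_P \cong \lambda_G$, yields $\lambda_{G/P} \prec \lambda_G$. You simply fill in details the paper leaves to \cite{BL}: the Langlands decomposition, Hulanicki--Reiter, and induction in stages.

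Your optional $\Xi$-function check is also sound, but the claim that $\mathbf{1}$ is cyclic needs a citation (Kostant's irreducibility of the spherical unitary principal series). Alternatively, you can bypass cyclicity with the positivity bound $|\langle \lambda_{G/P}(g)f, f\rangle| \leq \|f\|_{\infty}^{2}\,\Xi(g)$ for continuous $f$ on $G/P \cong K/M$. This shows directly that $\lambda_{G/P}$ is almost $L^2$.
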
 Temperedness can be defined in many equivalent ways. One of them, which will be especially useful for us, is in terms of the optimal integrability of matrix coefficients. First we need another definition. We caution the reader that the following terms are widely used in the literature, although their precise meaning depends on the authors.
\begin{defn}
Given $p \in [1, \infty)$, a unitary representation $(\pi, \mathcal{H})$ of $G$ is said to be:
\begin{itemize}
    \item[(a)] \emph{strongly} $L^p$ if there is a dense subspace $\mathcal{D} \subset \mathcal{H}$ so that $\langle \pi(\cdot)v, v \rangle \in L^p(G)$ for all $v \in \mathcal{D}$;
    \item[(b)] \emph{almost} $L^p$ if it is strongly $L^{p+\epsilon}$ for all $\epsilon > 0$;
    \item[(c)] \emph{totally} $L^p$ if all matrix coefficients of $\pi$ are in $L^{p}$; 
    \item[(d)] \emph{totally} $L^{p+}$ if it is totally $L^{p+\epsilon}$ for all $\epsilon > 0$. 
\end{itemize} A priori, item (b) is weaker than item (d). A result of Samei--Wiersma provides conditions on when they are in fact equivalent.
\begin{lem} [Theorem 5.3 of \cite{SW}]
\label{almosttotallyequiv}
For a noncompact real semisimple algebraic group $G$ and $2 \leq p < \infty$, a unitary representation is almost $L^p$ if and only if it is totally $L^{p+}$.
\end{lem}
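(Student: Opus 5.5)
The implication ``totally $L^{p+}$ $\Rightarrow$ almost $L^p$'' is immediate: if every matrix coefficient lies in $L^{p+\epsilon}(G)$ for every $\epsilon>0$, we may take $\mathcal{D}=\mathcal{H}$ in part (a) of the definition. So the plan concentrates on the converse. Fix $\epsilon>0$ and set $q=p+\epsilon$. Let $\mathcal{D}$ be a dense subspace whose diagonal coefficients lie in $L^{q}(G)$. We must show that \emph{every} coefficient $\langle\pi(\cdot)v,w\rangle$ lies in $L^{q+\epsilon'}(G)$ for every $\epsilon'>0$.

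\textbf{Reduction to $K$-finite vectors.} The first step replaces $\mathcal{D}$ by a dense space of $K$-finite vectors with the same integrability. For this, project with the $K$-isotypic projections $\pi(\chi_\tau)=\dim\tau\int_K\overline{\chi_\tau(k)}\pi(k)\,dk$. A coefficient of $\pi(\chi_\tau)v$ is an average over $K\times K$ of translates $g\mapsto\langle\pi(k_1gk_2)v,v\rangle$, and these translates have the same $L^q$ norm by bi-invariance of Haar measure on the unimodular group $G$. Off-diagonal coefficients $\langle \pi(\cdot)v,w\rangle$ with $v,w\in\mathcal{D}$ are handled by polarization.

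\textbf{Uniform pointwise decay for $K$-finite vectors.} The second step is to prove that, for $K$-finite $v,w$,
\begin{align*}
|\langle\pi(g)v,w\rangle|\le C\,(\dim\langle Kv\rangle\dim\langle Kw\rangle)^{1/2}\|v\|\|w\|\,\Xi(g)^{2/q},
\end{align*}
where $\Xi$ is Harish-Chandra's spherical function. I would do this in the style of Cowling--Haagerup--Howe. If $q\le 2k$ for an integer $k$, then $\pi^{\otimes k}$ has a dense set of coefficients in $L^{q/k}\subset L^{2}+L^\infty$ (locally), which makes it weakly contained in $\lambda_G$. Herz's majoration principle then gives the bound with exponent $1/k$. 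To reach the sharp exponent $2/q$, I would decompose $\pi$ as a direct integral of irreducibles. Almost every irreducible component must itself be strongly $L^{q}$, and for an irreducible representation the leading exponents in Casselman's asymptotic expansion control the decay of $K$-finite coefficients. These leading exponents are uniform over the components, since they are bounded by the $L^q$ condition.

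\textbf{Integrability of $\Xi$.} The third step uses the standard fact that $\Xi^{s}\in L^{r}(G)$ exactly when $rs>2$. This follows from the $KA^+K$ integration formula with density $\asymp e^{2\rho(\log a)}$ together with $\Xi(a)\asymp(1+\|\log a\|)^{d}e^{-\rho(\log a)}$. With $s=2/q$ and $r=q+\epsilon'$ we get $rs>2$, so the right-hand side of the bound above lies in $L^{q+\epsilon'}$.

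\textbf{Main obstacle: passing to arbitrary vectors.} A general vector is $v=\sum_\tau v_\tau$, and the factor $(\dim\tau)^{1/2}$ can make the resulting series diverge, so density alone does not transfer the $L^{q+\epsilon'}$ bound. My first attempt would be a majoration argument. Having shown that $\pi$ is weakly contained in the representation whose coefficients are majorized by $\Xi^{2/q}$, I would run Herz's trick for $p>2$ by interpolating between the tempered case ($q=2$, where Herz majorizes an arbitrary coefficient by $\langle\lambda_G(g)|f|,|h|\rangle$) and the trivial $L^\infty$ bound. The goal is a bound $|\langle\pi(g)v,w\rangle|\le\langle\lambda_G(g)F,H\rangle^{2/q}$ with $F,H\in L^2(G)$ nonnegative, followed by a check that such convolution-type functions are in $L^{q+\epsilon'}$. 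This last check is where the semisimplicity of $G$ (the Kunze--Stein phenomenon $L^2*L^{2-}\subset L^2$) should be used, and I expect it to be the hardest part.
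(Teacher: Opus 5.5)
Your easy direction and the $K$-isotypic reduction are fine, but the converse is not actually proved. Steps 1--3, even if completed, only give $L^{q+\epsilon'}$-integrability of coefficients for a dense set of vectors. The hypothesis already gives this for $v,w\in\mathcal{D}$: polarize, and use that coefficients are bounded, so $L^q\cap L^\infty\subseteq L^{q+\epsilon'}$. The whole content of the lemma is the passage to arbitrary $v,w\in\mathcal{H}$. There you only state a hoped-for majoration $|\langle\pi(g)v,w\rangle|\le\langle\lambda_G(g)F,H\rangle^{2/q}$, and nothing in your outline produces it: Herz's principle concerns coefficients of $\lambda_G$ itself, and fractional powers of positive definite functions need not be positive definite.

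Step 2 is also unsupported as written, for three reasons. First, the Cowling--Haagerup--Howe tensor trick yields $\Xi^{1/k}$ with $2k\ge q$, not $\Xi^{2/q}$. Second, integrability of a dense set of coefficients does not pass automatically to almost every direct-integral component. It already fails at the endpoint: $\lambda_G$ is strongly $L^1$, yet for $G=\mathrm{SL}(2,\mathbb{C})$ Plancherel-almost every component is a principal series representation, which has no nonzero vector with square-integrable coefficient. Third, Casselman's expansion controls exponents, not constants uniformly over components.

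The missing idea is to estimate the operators $\pi(f)$, $f\in C_c(G)$, rather than pointwise values of coefficients, since operator norms only need a dense set of vectors. Take $\xi\in\mathcal{D}$ with $\varphi_\xi=\langle\pi(\cdot)\xi,\xi\rangle\in L^q(G)$, and set $h=f^{*}*f$. The spectral theorem for $\pi(h)\ge 0$ and H\"older give $\langle\pi(h)\xi,\xi\rangle\le\langle\pi(h^{*n})\xi,\xi\rangle^{1/n}\|\xi\|^{2-2/n}\le(\|h^{*n}\|_{q'}\|\varphi_\xi\|_q)^{1/n}\|\xi\|^{2-2/n}$. Moreover $\|h^{*n}\|_{q'}\le\|\lambda_{q'}(h)\|^{n-1}\|h\|_{q'}$, where $\lambda_{q'}(h)$ is left convolution by $h$ on $L^{q'}(G)$. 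Letting $n\to\infty$ and using density of $\mathcal{D}$ gives $\|\pi(f)\|^2\le\|\lambda_{q'}(f^{*})\|\,\|\lambda_{q'}(f)\|$.

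Semisimplicity now enters through the Kunze--Stein phenomenon $L^s*L^2\subseteq L^2$ for $s<2$. Interpolated with $L^1*L^1\subseteq L^1$, it gives $\|\lambda_{q'}(f)\|\le C_s\|f\|_s$ for $1\le s<q'$ (note $q'<2$). Hence $|\int_G f(g)\langle\pi(g)v,w\rangle\,dg|=|\langle\pi(f)v,w\rangle|\le C_s\|f\|_s\|v\|\|w\|$ for all $v,w\in\mathcal{H}$. Duality then puts every coefficient in $L^{s'}(G)$ for every $s'>q$, i.e.\ $\pi$ is totally $L^{q+}$, with no $K$-types, $\Xi$, or direct integrals needed.

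The paper itself gives no argument here. It quotes Theorem 5.3 of Samei--Wiersma, whose framework of convolution operators on $L^p$-spaces together with Kunze--Stein is of this kind. It also only ever applies the lemma with $p=2m$ even.
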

\end{defn}
A famous result of Cowling--Haagerup--Howe allows us to express temperedness in terms of being almost $L^2$. Precisely:
\begin{thm}[Theorem 1 of \cite{CHH}]
\label{CowlingHaagerupHowe}
Let $G$ be a real semisimple algebraic group. For any unitary representation $(\pi, \mathcal{H})$ of $G$, the following are equivalent:
\begin{itemize}
    \item[(i)] $\pi$ is tempered.
    \item[(ii)] $\pi$ is almost $L^2$.
\end{itemize}
\end{thm}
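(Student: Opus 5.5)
Since $K$-finite vectors are dense in any unitary representation of $G$, I would reduce both directions to statements about $K$-finite matrix coefficients. The bridge between the two conditions is Harish-Chandra's spherical function $\Xi$, together with two standard facts.
\begin{itemize}
\item[(1)] The estimate $\Xi(k\exp(H)k') \le C(1+\|H\|)^{d} e^{-\rho(H)}$ for $H \in \mathfrak{a}^{+}$.
\item[(2)] The integration formula for the $KA^{+}K$ decomposition, in which the Haar density on $\mathfrak{a}^{+}$ is bounded by a multiple of $e^{2\rho(H)}$.
\end{itemize}
Together these give $\Xi \in L^{2+\epsilon}(G)$ for every $\epsilon>0$, while $\Xi \notin L^{2}(G)$.

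For (i) $\Rightarrow$ (ii), I would first prove the Herz-type bound
\begin{align*}
|\langle \lambda_G(g)v,w\rangle| \le (\dim \langle Kv\rangle \dim \langle Kw\rangle)^{1/2}\,\|v\|\,\|w\|\,\Xi(g)
\end{align*}
for $K$-finite $v,w\in L^2(G)$. The route is to average over $K$, reducing to bi-$K$-invariant functions. For these, $|\langle \lambda_G(g)f_1,f_2\rangle|\le \langle \lambda_G(g)|f_1|,|f_2|\rangle$, and the bound $\le \Xi(g)\|f_1\|\|f_2\|$ follows from the realization of $\Xi$ as a coefficient of $L^2(G/P)$ together with Fact \ref{tempforminparabolic}. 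The bound involves only diagonal coefficients on finite-dimensional $K$-types and constants independent of $\pi$, so it passes to any $\pi \prec \lambda_G$ by approximating coefficients uniformly on compacta. Hence every $K$-finite coefficient of a tempered $\pi$ is dominated by $C\,\Xi \in L^{2+\epsilon}$, and $\pi$ is almost $L^2$.

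For (ii) $\Rightarrow$ (i), I would use the operator-norm criterion: $\pi \prec \lambda_G$ if and only if $\|\pi(f)\| \le \|\lambda_G(f)\|$ for all $f \in C_c(G)$. Without loss of generality take $f$ to be $K$-bi-invariant after a reduction, and set $F = f^* * f \ge 0$ as an operator. Let $v$ lie in the dense subspace $\mathcal{D}$ on which $\varphi_v=\langle \pi(\cdot)v,v\rangle\in L^{2+\epsilon}$ for every $\epsilon$. By the spectral theorem,
\begin{align*}
\|\pi(f)|_{\overline{\langle \pi(G)v\rangle}}\|^{2} = \lim_n \langle \pi(F^{*n})v,v\rangle^{1/n},
\end{align*}
and Hölder's inequality gives $\langle \pi(F^{*n})v,v\rangle \le \|F^{*n}\|_{q}\,\|\varphi_v\|_{q'}$, where $q' = 2+\epsilon$ and $q<2$. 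It then remains to show that $\limsup_n \|F^{*n}\|_q^{1/n} \le \|\lambda_G(F)\|$. I would write $F^{*n} = F^{*(n-1)} * F$ and use the Kunze--Stein phenomenon ($L^2 * L^{q} \subset L^2$ for $q<2$), interpolated against the trivial $L^1$ bound, to show that convolution by $F$ on $L^q$ has norm controlled up to $o(1)$ in the exponent by its $L^2$ norm $\|\lambda_G(F)\|$ as $q\uparrow 2$. Letting $\epsilon\to 0$, so that $q\to 2$, gives the inequality on each cyclic subspace generated by vectors of $\mathcal{D}$. Density of $\mathcal{D}$ then yields it on all of $\mathcal{H}$.

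I expect the main obstacle to be this last spectral-radius comparison: controlling the $L^q$ norms of the iterated convolutions $F^{*n}$ by $\|\lambda_G(F)\|^n$ up to subexponential factors. If the Kunze--Stein interpolation proves too lossy, my fallback is the tensor power trick of Cowling--Haagerup--Howe. Choose $k$ with $2k$ large enough that the coefficients $\varphi_v^{k}$ lie in $L^{2}$. Then $\pi^{\otimes k}$ has a dense set of $L^2$ coefficients, so it embeds in $\infty\cdot\lambda_G$. This gives the bound $|\varphi_v|\le C\,\Xi^{1/k}$, which I would then bootstrap, using the subexponential gap between $\Xi$ and $e^{-\rho}$, back to $|\varphi_v| \le C\,\Xi$ and thereby to weak containment.
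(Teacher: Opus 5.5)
The paper gives no proof of this statement; it quotes it from Cowling--Haagerup--Howe. Your two-part outline is essentially their route: a $\Xi$-bound on $K$-finite coefficients of tempered representations for (i)$\Rightarrow$(ii), and a H\"older/spectral-radius argument for (ii)$\Rightarrow$(i). The second part in fact works for every locally compact group. The plan is sound, but several steps are justified by the wrong mechanism.

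\emph{(i)$\Rightarrow$(ii).} You need the bound $\langle \lambda_G(g)f_1,f_2\rangle\le \Xi(g)\|f_1\|\|f_2\|$ for nonnegative $\lambda_G(K)$-invariant $f_i$. It does not follow from Fact~\ref{tempforminparabolic}, because $\lambda_{G/P}\prec\lambda_G$ only says that $\Xi$ is approximated by coefficients of $\lambda_G$. That gives no upper bound on coefficients of $\lambda_G$.

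The correct input is Herz's majorization principle. By induction in stages, $\lambda_G\cong\mathrm{Ind}_P^G\lambda_P$. For $\xi,\eta$ in the induced space, the fibrewise norms $|\xi|,|\eta|$ lie in $\mathrm{Ind}_P^G\mathbf{1}=L^2(G/P)$ and satisfy $|\langle\lambda_G(g)\xi,\eta\rangle|\le\langle\lambda_{G/P}(g)|\xi|,|\eta|\rangle$. If $\xi,\eta$ are $K$-invariant, then $|\xi|,|\eta|$ are $K$-invariant, hence constant since $K$ is transitive on $G/P$. The right side is then $\|\xi\|\|\eta\|\Xi(g)$.

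Also, averaging a $K$-finite $v$ over $K$ kills its nontrivial $K$-types, so it cannot be the reduction. What gives your constant is the pointwise majorant $|v|\le\|v\|\,\tilde v$, with $\tilde v=(\sum_i|v_i|^2)^{1/2}$ for an orthonormal basis $(v_i)$ of $\langle\lambda_G(K)v\rangle$. This $\tilde v$ is $K$-invariant with $\|\tilde v\|^2=\dim\langle\lambda_G(K)v\rangle$.

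Finally, to pass the bound to $\pi\prec\lambda_G$, you must first project the approximating vectors of $\lambda_G$ onto the finitely many $K$-isotypic components containing $v$. Concretely, average the approximating functions over $K\times K$ against the relevant characters; this preserves uniform convergence on compacta. Only then do the approximants have $K$-spans of bounded dimension. Any constant suffices, since $\Xi\in L^{2+\epsilon}$.

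\emph{(ii)$\Rightarrow$(i).} Drop the reduction to bi-$K$-invariant $f$. It is invalid: if $\pi$ has no $K$-fixed vectors, then $\pi(f)=0$ for every such $f$, so those inequalities test nothing. It is also unnecessary, since the argument works for all $f\in C_c(G)$.

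Your identity for the norm on the cyclic subspace is false in general. With $T=\pi(F)$, the limit $\lim_n\langle T^n v,v\rangle^{1/n}$ is only the top of the support of the spectral measure of $v$. That is enough, though: if it is $\le c$ for all $v$ in a dense set, then $E_T((c,\infty))=0$, so $\|T\|\le c$.

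The step you call the main obstacle needs neither Kunze--Stein nor the bound $\limsup_n\|F^{*n}\|_q^{1/n}\le\|\lambda_G(F)\|$ at a fixed $q<2$. That bound already fails at $q=1$ for $f\ge 0$, where $\|F^{*n}\|_1=\|F\|_1^n$ and $\|F\|_1>\|\lambda_G(F)\|$ by non-amenability. Log-convexity of $L^p$-norms suffices. With $1/q+1/(2+\epsilon)=1$ and $\theta=2/(2+\epsilon)$,
\[
\|F^{*n}\|_q\le\|F^{*n}\|_1^{1-\theta}\,\|F^{*n}\|_2^{\theta}\le\|F\|_1^{n(1-\theta)}\bigl(\|\lambda_G(F)\|^{n-1}\|F\|_2\bigr)^{\theta}.
\]
Doing the density step at fixed $\epsilon$ gives $\|T\|\le\|F\|_1^{1-\theta}\|\lambda_G(F)\|^{\theta}$; this also lets the dense subspace depend on $\epsilon$, as the paper's definition allows. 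Letting $\epsilon\to0$ gives $\|\pi(f)\|^2\le\|\lambda_G(f)\|^2$.

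Your tensor-power fallback would not have worked. $\pi^{\otimes k}\prec\lambda_G$ only yields $|\varphi_v|\le C\,\Xi^{1/k}$, and there is no bootstrap from that to $C\,\Xi$. Even a $\Xi$-bound on a dense set would only return you to hypothesis (ii).
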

\begin{rem}
Cowling--Haagerup--Howe in fact also characterize temperedness by giving an upper-bound on $K$-finite matrix coefficients in terms of the Harish-Chandra function $\Xi = \Xi_{G}$ of $G$ (here $K$ is a maximal compact subgroup of $G$). Since this will not play a role in the paper, we decided to omit the precise formulation of this result.
\end{rem}
\subsection{Tensor Product of Unitary Representations}
We now introduce the tensor product of unitary representations, which will appear in the proof of Proposition \ref{semisimplealmostL2m}. Let $(\mathcal{H}, \langle \cdot, \cdot \rangle_{\mathcal{H}})$ and $(\mathcal{V},  \langle \cdot, \cdot \rangle_{\mathcal{V}})$ be Hilbert spaces, and $\mathcal{H} \otimes \mathcal{V}$ their vector space tensor product. The completion of $\mathcal{H} \otimes \mathcal{V}$, with respect to the unique inner product $\langle \cdot, \cdot \rangle$ for which
\begin{align*}
\langle v_{1} \otimes w_{1}, v_{2} \otimes w_{2} \rangle = \langle v_{1}, v_{2} \rangle_{\mathcal{H}} \cdot \langle w_{1}, w_{2} \rangle_{\mathcal{V}}, \ \  \mathrm{for \ all} \ \ v_{1}, v_{2} \in \mathcal{H}, \ w_{1}, w_{2} \in \mathcal{V}
\end{align*} is called the $\emph{Hilbert tensor product}$ of $\mathcal{H}$ and $\mathcal{V}$, and is again denoted by $\mathcal{H} \otimes \mathcal{V}$.
\begin{defn} [Tensor product of unitary representations]
If $(\pi, \mathcal{H})$ and $(\sigma, \mathcal{V})$ are unitary representations of $G$, their $\emph{tensor product}$ is the unitary representation $\pi \otimes \sigma$ of $G$ defined on $\mathcal{H} \otimes \mathcal{V}$ by
\begin{align*}
(\pi \otimes \sigma)(g)(v \otimes w) := \pi(g)v \otimes \sigma(g)w,
\end{align*} for all $v \in \mathcal{H}, w \in \mathcal{V}$, and $g \in G$.
\end{defn} One similarly defines the tensor product of finitely many unitary representations $\{(\pi_{i}, \mathcal{H}_{i})\}_{1 \leq i \leq k}$, writing $\big( \otimes_{i=1}^{k} \pi_{i} , \otimes_{i=1}^{k} \mathcal{H}_{i} \big)$ for the resulting unitary representation. We record the following immediate, but very useful, observation regarding tensor products.
\begin{obs}
\label{densityofproductofcoeffs}
If $\{(\pi_{i}, \mathcal{H}_{i})\}_{1 \leq i \leq k}$ is a finite collection of unitary representations of a topological group $G$, then the set consisting of the products of their matrix coefficients is dense in the set of matrix coefficients of $\big( \otimes_{i=1}^{k} \pi_{i} , \otimes_{i=1}^{k} \mathcal{H}_{i} \big)$. 
\end{obs}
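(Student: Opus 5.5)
The plan is to unwind the definition of the Hilbert tensor product and use linearity and continuity of matrix coefficients. By definition, a matrix coefficient of $\otimes_{i=1}^{k}\pi_i$ has the form $g \mapsto \langle (\otimes_i \pi_i)(g) v, w\rangle$ for some $v, w \in \otimes_{i=1}^{k}\mathcal{H}_i$. I will interpret ``dense'' as density in the topology of uniform convergence on $G$, which is the natural sense in which matrix coefficients are compared.

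The first step handles elementary tensors. Take $v = v_1\otimes\cdots\otimes v_k$ and $w = w_1\otimes\cdots\otimes w_k$. By the defining formula for the tensor product inner product and the defining action of $\otimes_i\pi_i$,
\begin{align*}
\langle (\otimes_i\pi_i)(g)v, w\rangle = \prod_{i=1}^{k}\langle \pi_i(g)v_i, w_i\rangle_{\mathcal{H}_i}.
\end{align*}
So the coefficient is exactly a product of matrix coefficients of the $\pi_i$. By sesquilinearity, when $v$ and $w$ are finite sums of elementary tensors, the coefficient is a finite linear combination of such products.

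The second step passes to general vectors. The algebraic tensor product is dense in the Hilbert tensor product by construction, so for arbitrary $v, w$ we can choose finite sums $v_n \to v$ and $w_n \to w$. Since each $(\otimes_i\pi_i)(g)$ is unitary, the Cauchy--Schwarz inequality gives
\begin{align*}
\sup_{g\in G}\bigl|\langle (\otimes\pi_i)(g)v,w\rangle-\langle (\otimes\pi_i)(g)v_n,w_n\rangle\bigr| \le \|v-v_n\|\,\|w\|+\|v_n\|\,\|w-w_n\|,
\end{align*}
and the right-hand side tends to $0$. Hence the coefficients coming from finite sums of elementary tensors approximate every matrix coefficient uniformly on $G$.

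The only subtle point is the phrasing of the conclusion. What the argument shows directly is that the \emph{linear span} of products of matrix coefficients is dense, not the set of products itself. If the statement is meant literally, one notes that a finite sum $\sum_j c_j\prod_i\phi^{(j)}_i$ is still a matrix coefficient of $\otimes_i\pi_i$. In the intended applications this is enough, since integrability or decay properties pass to finite linear combinations. I would state the observation in this spanning form, which is what the paper presumably uses when proving Proposition \ref{semisimplealmostL2m}.
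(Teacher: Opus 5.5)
Your argument is correct. It is the reasoning the paper leaves implicit, since the paper records this observation as immediate and gives no proof: elementary tensors give products of coefficients, and density of the algebraic tensor product together with your Cauchy--Schwarz estimate gives uniform approximation of every coefficient of $\otimes_i\pi_i$ by finite sums of such products.

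Your caveat is also correct and worth keeping. The literal set of products is not dense in general. For instance, take $G=\mathbb{Z}$, $\pi_1=\chi_1\oplus\chi_2$ and $\pi_2=\chi_3\oplus\chi_4$, with the four characters $\chi_i\chi_j$ distinct. Then every product $(\alpha\chi_1+\beta\chi_2)(\gamma\chi_3+\delta\chi_4)$ has a rank-one coefficient matrix, and this condition is closed even under pointwise convergence, because distinct characters are linearly independent on finitely many points. By contrast, the diagonal coefficient of $e_1\otimes f_1+e_2\otimes f_2$ (the eigenvectors of the four characters) is $\chi_1\chi_3+\chi_2\chi_4$, which has rank two.

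What the proof of Proposition \ref{semisimplealmostL2m} actually needs is your first step: the algebraic tensor product is a dense \emph{subspace} whose diagonal coefficients are finite sums of products. That is exactly the input the definition of strongly $L^p$ asks for.
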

\section{Background on the work of Benoist--Liang}
\label{BackgrounonBenoist--Liang}
As discussed in the previous section, tempered representations are essential in the study of harmonic analysis on semisimple Lie groups. In a series of papers, \cite{BK1}, \cite{BK2}, \cite{BK3}, \cite{BK4}, Benoist--Kobayashi conducted a very comprehensive study to determine necessary and sufficient geometric criteria for when the quasi-regular representation $(\lambda_{G/H}, L^2(G/H))$ of $G$ on a homogeneous space $G/H$ is tempered. In \cite{BL}, Benoist--Liang unified both the results of this series of works, as well as work of Lutsko--Weich--Wolf \cite{LWW}, by considering the action of $G$ on $G/H$ for \emph{any closed subgroup} $H$ of $G$ and providing temperedness criteria for $(\lambda_{G/H}, L^2(G/H))$ in terms of several quantites associated to this action. We will recall the definitions of three of the four so-called ``exponents'' of Benoist--Liang, as well as some of their results, which will be essential in section \ref{establishingmainthm}.
\begin{defn} [Definition 1.2 of \cite{BL}]
The \emph{optimal integrability exponent} $p_{G/H}$ is defined as the infimum of all $p \in [1, \infty]$ so that, for any $f \in C_{c}(G/H)$, we have 
\begin{align*}
\langle \lambda_{G/H}(\cdot) f, f \rangle \in L^{p}(G).
\end{align*}
\end{defn}
\emph{Motivation of Benoist--Liang for} $p_{G/H}$. By Theorem \ref{CowlingHaagerupHowe}, we have
\begin{align}
\label{equiv1}
L^2(G/H) \ \mathrm{is \ tempered} \ \Longleftrightarrow \ p_{G/H} \leq 2.
\end{align}
\begin{defn} [Definition 1.1 of \cite{BL}]
The \emph{coefficient decay exponent} $\theta_{G/H}$ is defined as the infimum of $\theta \in [0,1]$ such that, for any $f \in C_{c}(G/H)$, there exists a constant $C = C(f) > 0$ so that, uniformly for all $g \in G$, we have
\begin{align*}
| \langle \lambda_{G/H}(g)f,f \rangle | \leq C e^{2(\theta - 1) \rho(\kappa(g))}.  
\end{align*}
\end{defn} 
\emph{Motivation of Benoist--Liang for} $\theta_{G/H}$. By Theorem 1 and 2 of \cite{CHH}, we have
\begin{align}
\label{equiv2}
L^2(G/H) \ \mathrm{is \ tempered} \ \Longleftrightarrow \ \theta_{G/H} \leq 1/2.
\end{align} See Corollary 3.10 of \cite{BL} for more details.
\begin{defn} [Definition 1.3 of \cite{BL}]
The \emph{relative volume growth exponent} $\delta_{G/H}$ of $H$ inside $G$, which turns out to lie in $[0,1]$ is defined by
\begin{align*}
\delta_{G/H} := \max \bigg \{0, \limsup_{g_{n} \rightarrow \infty} \frac{\log \nu_{H}(H \cap Bg_{n}B)}{\log \nu_{G}(Bg_{n}B)} \bigg \},
\end{align*} where $B$ is any compact subset of $G$ of non-empty interior and the notation ``$g_{n} \to \infty$'' means that the sequence $\{g_{n}\}$ eventually escapes every compact subset of $G$. The measure $\nu_{G}$ is the Haar measure on $G$, while $d\nu_{H}(h) = (\det \mathrm{Ad}_{H}h)^{1/2} dh$ is the symmetric measure on $H$ (see (2.1) of \cite{BL}).
\end{defn}
\emph{Motivation of Benoist--Liang for} $\delta_{G/H}$. In the case when $H = \Gamma$ is a discrete subgroup of $G$, the exponent $\delta_{G/H} = \delta_{G/\Gamma}$ is closely related to several measurements of the asymptotic growth rate of $\Gamma$-orbits in $X = G/K$ which are already present in the literature.
\par 
For a discrete subgroup $\Gamma < G$, recall that \emph{Quint's growth indicator function} $\psi_{\Gamma} : \mathfrak{a}^{+} \rightarrow \mathbb{R} \cup \{-\infty\}$, introduced by Quint in \cite{Q1}, is defined as
\begin{align*}
\psi_{\Gamma}(v) = ||v|| \inf_{\mathcal{C} \ni v} \inf \bigg \{ s \in \mathbb{R} : \sum_{\gamma \in \Gamma, \kappa(\gamma) \in \mathcal{C}} e^{-s ||\kappa(\gamma)||} < \infty \bigg \},
\end{align*} where the infimum is taken over all open cones $\mathcal{C} \subset \mathfrak{a}^{+}$  containing $v$. The quantity on the right-hand side is independent on the choice of norm $||\cdot||$ on $\mathfrak{a}^{+}$, hence the growth indicator function is well-defined. Benoist--Liang established the following relation between $\delta_{G/\Gamma}$ and $\psi_{\Gamma}$.
\begin{prop} [Proposition 4.17 of \cite{BL}]
\label{relationwithgrowthind}
If $\Gamma$ is a discrete subgroup of $G$, then 
\begin{align*}
\delta_{G/\Gamma} = \max \bigg \{ \sup_{v \in \mathfrak{a}^{+}} \frac{\psi_{\Gamma}(v)}{2 \rho(v)}, 0 \bigg \}.
\end{align*} 
\end{prop}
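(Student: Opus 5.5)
The proof compares both sides of the formula through the Cartan projection. The key preliminary remark is that for a discrete subgroup $H=\Gamma$ the modular factor $\det \mathrm{Ad}_{\Gamma}$ is trivial, so the symmetric measure $\nu_{\Gamma}$ is counting measure (suitably normalised). Hence
\begin{align*}
\delta_{G/\Gamma}=\max\Big\{0,\ \limsup_{g_n\to\infty}\frac{\log \#(\Gamma\cap Bg_nB)}{\log \nu_G(Bg_nB)}\Big\}.
\end{align*}
I will use two standard facts about $G$.

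First, for a fixed compact $B$ there is $C_B$ with $\|\kappa(b_1gb_2)-\kappa(g)\|\le C_B$ for all $b_1,b_2\in B$ and $g\in G$.

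Second, for $B$ a compact neighbourhood of the identity,
\begin{align*}
\nu_G(BgB)\asymp e^{2\rho(\kappa(g))}
\end{align*}
up to multiplicative constants. This follows from the Haar measure formula in $KA^+K$ coordinates, whose density is $\prod_{\alpha\in\Sigma^+}\sinh(\alpha(\cdot))^{\dim\mathfrak g_\alpha}$, together with the first fact.

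Combining these, the quantity to be computed is
\begin{align*}
\limsup \frac{\log\#(\Gamma\cap Bg_nB)}{2\rho(\kappa(g_n))}.
\end{align*}
Throughout I use that $\rho$ is strictly positive on $\mathfrak a^+\smallsetminus\{0\}$. I also take for granted, as in \cite{BL}, that the $\limsup$ does not depend on $B$. This lets me use $B=K\exp(\overline{B_1(0)})K$ in the lower bound.

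\emph{Upper bound.} Let $g_n\to\infty$ and pass to a subsequence with $\kappa(g_n)/\|\kappa(g_n)\|\to v$, where $\|v\|=1$. Fix an open cone $\mathcal C\ni v$ and any $s$ larger than the abscissa of convergence of the series restricted to $\mathcal C$. Convergence of that series gives
\begin{align*}
\#\{\gamma:\kappa(\gamma)\in\mathcal C,\ \|\kappa(\gamma)\|\le T\}\le c\,e^{sT}.
\end{align*}
For $n$ large, every $\gamma\in\Gamma\cap Bg_nB$ has $\kappa(\gamma)\in\mathcal C$ and $\|\kappa(\gamma)\|\le\|\kappa(g_n)\|+C_B$, by the first fact. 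Therefore
\begin{align*}
\frac{\log\#(\Gamma\cap Bg_nB)}{2\rho(\kappa(g_n))}\le\frac{s\|\kappa(g_n)\|+O(1)}{2\rho(\kappa(g_n))}\longrightarrow\frac{s}{2\rho(v)}.
\end{align*}
Taking the infimum over $s$ and then over cones $\mathcal C$ yields $\psi_\Gamma(v)/2\rho(v)$. So $\delta_{G/\Gamma}$ is at most the maximum of $0$ and $\sup_v \psi_\Gamma(v)/2\rho(v)$.

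\emph{Lower bound.} Suppose $\psi_\Gamma(v)>0$, with $\|v\|=1$ and $\epsilon>0$ small. Choose a narrow open cone $\mathcal C\ni v$ on which $2\rho(u)\le(2\rho(v)+\epsilon)\|u\|$. Take $s<\psi_\Gamma(v)$; the restricted series diverges at $s$, since its abscissa is at least $\psi_\Gamma(v)$. Divergence gives infinitely many integers $T$ with
\begin{align*}
\#\{\gamma:\kappa(\gamma)\in\mathcal C,\ T\le\|\kappa(\gamma)\|<T+1\}\ge e^{(s-\epsilon)T}.
\end{align*}
This shell in $\mathfrak a$ is covered by $O(T^{\dim\mathfrak a})$ unit balls. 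By pigeonhole, some unit ball centred at $a_T\in\mathfrak a^+$ contains $\kappa(\gamma)$ for at least $e^{(s-\epsilon)T}/\mathrm{poly}(T)$ elements $\gamma$. Set $g_T=\exp(a_T)$.

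Each such $\gamma$ has the form $\gamma=k\exp(a_T+b)\ell$ with $k,\ell\in K$ and $\|b\|\le 1$. Since $A$ is abelian, $\gamma=k\exp(b)\,g_T\,\ell\in Bg_TB$. The ratio is then at least
\begin{align*}
\frac{(s-\epsilon)T-O(\log T)}{(2\rho(v)+\epsilon)(T+1)},
\end{align*}
which tends to $(s-\epsilon)/(2\rho(v)+\epsilon)$. Letting $s\to\psi_\Gamma(v)$ and $\epsilon\to0$ gives the reverse inequality; the $\max$ with $0$ is automatic.

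The main obstacle is the volume estimate $\nu_G(BgB)\asymp e^{2\rho(\kappa(g))}$ uniformly near the walls of $\mathfrak a^+$, together with the independence of $\delta_{G/H}$ from $B$. Near the walls the $\sinh$ factors degenerate, so I will integrate the density over the set $\{a:\|a-\kappa(g)\|\le C\}\cap\mathfrak a^+$. This set always contains a ball of fixed size on which the density is comparable to $e^{2\rho(\kappa(g))}$, which gives the lower bound. The upper bound follows from $\sinh(x)\le e^{x}/2$.
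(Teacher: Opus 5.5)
Your proof is correct. The paper gives no argument of its own for this statement; it quotes it as Proposition 4.17 of \cite{BL}, so there is no in-paper route to compare against.

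Your argument is a natural self-contained proof. You replace $\log\nu_G(BgB)$ by $2\rho(\kappa(g))+O(1)$ using the $KA^+K$ Haar density. You then compare $\#(\Gamma\cap BgB)$ with orbit counts in thin cones around the direction of $\kappa(g)$, choosing $B=K\exp(\overline{B_1(0)})K$ so that $BgB$ is essentially a $\kappa$-ball. Two small points need tidying.

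\textbf{Upper bound.} The estimate $\#\{\gamma:\kappa(\gamma)\in\mathcal C,\ \|\kappa(\gamma)\|\le T\}\le c\,e^{sT}$ follows from convergence of the restricted series only when $s\ge 0$. The abscissa of a cone can be $-\infty$ (exactly when only finitely many $\kappa(\gamma)$ lie in $\mathcal C$), and then the bound fails for negative $s$. In that case, though, $\Gamma\cap Bg_nB$ is eventually empty: its elements have $\kappa(\gamma)\in\mathcal C$ and $\|\kappa(\gamma)\|\ge\|\kappa(g_n)\|-C_B\to\infty$. So you should take $s>\max\{\tau_{\mathcal C},0\}$, where $\tau_{\mathcal C}$ is the abscissa of the series restricted to $\mathcal C$. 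This gives the bound $\max\{\psi_\Gamma(v),0\}/2\rho(v)$, which is harmless because of the outer $\max$ with $0$.

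\textbf{Lower bound.} The pigeonholed centre $a_T$ need not lie in $\mathcal C$. However, it is within distance $1$ of some $\kappa(\gamma)\in\mathcal C$ with $\|\kappa(\gamma)\|<T+1$. Hence $\log\nu_G(Bg_TB)\le(2\rho(v)+\epsilon)(T+1)+O(1)$, and the extra $O(1)$ terms (including the one from the volume estimate) do not affect the limit.
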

From the definition of the growth indicator function, one sees that it is a homogeneous function which satisfies $\psi_{\Gamma}(v) \leq \delta(\Gamma)$ for all unit vectors $v \in \mathfrak{a}^{+}$. When $G$ has real rank one, $\psi_{\Gamma}$ coincides with $\delta(\Gamma)$. Furthermore, we always have $2||\rho|| = h_{\mathrm{vol}}(X)$ and, when $G$ has rank one, $2 \rho = h_{\mathrm{vol}}(X)$ (see for instance \cite{Leu}). Hence, if $G$ has rank one and $\Gamma < G$ is a discrete subgroup, we have 
\begin{align}
\label{realrankonerelations}
\delta(\Gamma) = h_{\mathrm{vol}}(X) \cdot \delta_{G/\Gamma}.
\end{align} To conclude this expository section, we recall several important facts about these exponents, established by Benoist--Liang, which will be crucial in what follows.
\begin{lem} [Lemma 6.14 of \cite{BL}]
\label{comparisonofcoeffdecay}
Let $G$ be a noncompact real semisimple algebraic group. For any closed subgroups $H_{2} < H_{1} < G$, we have $\theta_{G/H_{2}} \leq \theta_{G/H_{1}}$.
\end{lem}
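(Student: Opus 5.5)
The plan is to compare the matrix coefficients of $\lambda_{G/H_{2}}$ with those of $\lambda_{G/H_{1}}$ through a positivity and majorization argument. First, fix a $G$-quasi-invariant Radon measure on each of $G/H_{1}$ and $G/H_{2}$. Choose these compatibly, so that integrating over the fibers $H_{1}/H_{2}$ of the natural projection
\begin{align*}
p : G/H_{2} \rightarrow G/H_{1}
\end{align*}
relates them. Because both $\theta$-exponents are defined by bounds over all $f \in C_{c}$, it suffices to treat nonnegative $f$. Indeed, $|\langle \lambda(g) f, f\rangle| \leq \langle \lambda(g)|f|, |f|\rangle$ for the quasi-regular representation, since the Radon--Nikodym scaling factor is positive. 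Any $f \in C_{c}(G/H_{2})$ with $f \geq 0$ is dominated by $\varphi \circ p$ times a suitable cutoff, for some $\varphi \in C_{c}(G/H_{1})$.

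The key step is to realize $\lambda_{G/H_{2}}$ as the representation induced from $\lambda_{H_{1}/H_{2}}$ on $H_{1}$, and then use an integral formula for the matrix coefficient. Concretely, for $f \geq 0$ in $C_{c}(G/H_{2})$, I will write $\langle \lambda_{G/H_{2}}(g) f, f\rangle$ as an integral over $G/H_{1}$. The integrand at $xH_{1}$ is a matrix coefficient of the $H_{1}$-representation $L^{2}(H_{1}/H_{2})$, evaluated at the cocycle element $\sigma(g,x) \in H_{1}$. Each such inner integral is bounded by the product of the fiber $L^{2}$-norms, by Cauchy--Schwarz. This bounds the whole expression by
\begin{align*}
\langle \lambda_{G/H_{1}}(g) F, F \rangle, \qquad F(xH_{1}) := \| f|_{xH_{1}} \|_{L^{2}(xH_{1}/H_{2})}.
\end{align*}
Here $F$ is continuous, or at least bounded and compactly supported, on $G/H_{1}$. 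Equivalently, the argument is $\lambda_{G/H_{2}} = \mathrm{Ind}_{H_{1}}^{G}(\lambda_{H_{1}/H_{2}})$ combined with the fact that coefficients of a positive kernel are dominated by those of the trivial-fiber version. The Radon--Nikodym factors must be tracked so that the two sides match; this is a routine but somewhat fiddly verification.

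Finally, $F$ is bounded and compactly supported, so it is dominated pointwise by some $\varphi \in C_{c}(G/H_{1})$ with $\varphi \geq 0$. Positivity then gives
\begin{align*}
\langle \lambda_{G/H_{1}}(g) F, F\rangle \leq \langle \lambda_{G/H_{1}}(g)\varphi, \varphi\rangle \leq C(\varphi)\, e^{2(\theta-1)\rho(\kappa(g))}
\end{align*}
for every $\theta > \theta_{G/H_{1}}$. It follows that $\theta_{G/H_{2}} \leq \theta$, and letting $\theta \downarrow \theta_{G/H_{1}}$ gives the lemma. I expect the main obstacle to be the measure-theoretic middle step: producing a correct disintegration formula for quasi-invariant measures that respects the unitarizing factors. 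I would handle this by fixing rho-functions for $H_{2} < H_{1} < G$ in the standard way (as in Appendix B of \cite{BdV}) and verifying the formula on product-type test functions.
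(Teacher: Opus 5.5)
Your argument is correct. The paper gives no proof of its own here: the lemma is quoted from Benoist--Liang and used only as a black box, together with Theorem~\ref{equalityofexponents}, to obtain Corollary~\ref{comparisonofcritexp}. What you supply is a self-contained proof: Herz's majorization principle applied to $\lambda_{G/H_2}\cong\mathrm{Ind}_{H_1}^{G}\lambda_{H_1/H_2}$.

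The step you call fiddly does close, as follows. Take continuous rho-functions $\rho_1,\rho_2$ for $(G,H_1)$ and $(G,H_2)$, with associated measures $\mu_1,\mu_2$. This choice is legitimate: changing the quasi-invariant measure by a continuous positive density is a unitary equivalence that preserves $C_c$, so $\theta$ is unaffected. For each $x\in G$, the function $h\mapsto(\rho_2/\rho_1)(xh)$ is a rho-function for $(H_1,H_2)$, and it gives a measure $\mu^{x}$ on $H_1/H_2$. Its pushforward under $hH_2\mapsto xhH_2$ depends only on $xH_1$, and these pushforwards disintegrate $\mu_2$ over $\mu_1$. Moreover, for $h\in H_1$,
\begin{align*}
\frac{\rho_2(g^{-1}xh)}{\rho_2(xh)}=\frac{\rho_1(g^{-1}x)}{\rho_1(x)}\cdot\frac{d\mu^{g^{-1}x}}{d\mu^{x}}(hH_2).
\end{align*}
The second factor is exactly what Cauchy--Schwarz in $L^2(H_1/H_2,\mu^{x})$ absorbs. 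This yields $|\langle\lambda_{G/H_2}(g)f,f\rangle|\le\langle\lambda_{G/H_1}(g)F,F\rangle$ with your $F$.

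Two small simplifications are available. Writing $|f|^2(yH_2)=\int_{H_2}\phi(yk)\,dk$ with $0\le\phi\in C_c(G)$ gives $F(xH_1)^2=\int_{H_1}\phi(xh)\,(\rho_2/\rho_1)(xh)\,dh$. So $F\in C_c(G/H_1)$ outright, and your final domination by $\varphi$ is unnecessary, though harmless. Likewise, the remark about $\varphi\circ p$ times a cutoff in your first paragraph is never used and can be dropped.

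Compared with the bare citation, your route proves something stronger: every $C_c$-coefficient of $\lambda_{G/H_2}$ is pointwise dominated by a $C_c$-coefficient of $\lambda_{G/H_1}$. This gives monotonicity of $p_{G/H}$ as well as of $\theta_{G/H}$ directly.
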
 
\begin{thm} [Theorem A of \cite{BL}]
\label{equalityofexponents}
Let $H$ be a closed subgroup of $G$. Then,
\begin{align*}
\theta_{G/H} = \delta_{G/H} = 1 - \frac{1}{p_{G/H}}.
\end{align*}
\end{thm}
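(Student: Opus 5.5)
The plan is to prove Theorem \ref{equalityofexponents} by establishing the cyclic chain of inequalities
\begin{align*}
1 - \tfrac{1}{p_{G/H}} \leq \theta_{G/H} \leq 1 - \tfrac{1}{p_{G/H}}, \qquad \delta_{G/H} = 1 - \tfrac{1}{p_{G/H}},
\end{align*}
using the $KA^{+}K$ decomposition throughout. In these coordinates the Haar measure of $G$ has density comparable to $e^{2\rho(\kappa(g))}$ away from the walls, and $\nu_{G}(BgB) \asymp e^{2\rho(\kappa(g))}$.

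\emph{Step 1: $1 - 1/p_{G/H} \leq \theta_{G/H}$.} Take $\theta > \theta_{G/H}$ and $f \in C_{c}(G/H)$. By definition $|\langle \lambda_{G/H}(g) f, f\rangle| \leq C e^{2(\theta-1)\rho(\kappa(g))}$. Integrating the $p$-th power in $KA^{+}K$ coordinates gives a bound by $\int_{\mathfrak{a}^{+}} e^{2(p(\theta-1)+1)\rho(v)}\,dv$. This is finite once $p(1-\theta) > 1$. Hence $p_{G/H} \leq 1/(1-\theta)$, and letting $\theta \to \theta_{G/H}$ gives the inequality.

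\emph{Step 2: $\theta_{G/H} \leq 1 - 1/p_{G/H}$.} This is the step I expect to be the main obstacle. First suppose $p > p_{G/H}$ and $p \leq 2k$ for an integer $k$. By Lemma \ref{almosttotallyequiv}, $\lambda_{G/H}$ is then totally $L^{p+}$, so every matrix coefficient lies in $L^{2k+\epsilon}$. By Observation \ref{densityofproductofcoeffs}, the $k$-fold tensor power $\lambda_{G/H}^{\otimes k}$ is almost $L^{2}$, hence tempered by Theorem \ref{CowlingHaagerupHowe}. The Cowling--Haagerup--Howe bound by the Harish-Chandra function $\Xi(g) \leq C(1+\|\kappa(g)\|)^{d} e^{-\rho(\kappa(g))}$ then applies to $\langle \lambda_{G/H}(g)f,f\rangle^{k}$ for $K$-finite $f$. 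This yields decay $e^{-\rho(\kappa(g))/k + o(\|\kappa(g)\|)}$, i.e.\ $\theta \leq 1 - 1/(2k)$.

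That argument only gives the sharp bound when $p_{G/H}$ is an even integer. For general $p$, I would first try to avoid integer tensor powers by a convexity argument. Concretely, I would interpolate the pointwise decay rate between consecutive values $2k$, using that $\log|\langle\lambda(g)f,f\rangle|$ is controlled along rays in $\mathfrak{a}^{+}$ by a subadditive argument. Alternatively, I would pass through $\delta_{G/H}$, i.e.\ prove $\theta_{G/H} \leq \delta_{G/H}$ directly. To do so, I would write the coefficient as an $H$-average
\begin{align*}
\langle \lambda_{G/H}(g) f, f\rangle = \int_{H} \Phi(g h)\, d\nu_{H}(h), \qquad \Phi = F * \check{F}.
\end{align*}
I would then split $H$ into shells according to $\kappa(h)$ and estimate each shell against $\Xi$ using $\nu_{H}(H \cap B g' B) \lesssim e^{2(\delta+\epsilon)\rho(\kappa(g'))}$. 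Finally I would pass from $K$-finite vectors to all $f \in C_{c}(G/H)$ by the standard $K$-averaging/domination argument.

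\emph{Step 3: $\delta_{G/H} = 1 - 1/p_{G/H}$.} Take $f = \mathbf{1}_{BH/H}$ for a compact $B$ with nonempty interior. Unfolding the symmetric measure shows
\begin{align*}
\langle \lambda_{G/H}(g) f, f\rangle \asymp \nu_{H}(H \cap B' g B'),
\end{align*}
up to constants, with $B'$ a slightly enlarged compact set.

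For the lower bound on $p_{G/H}$, choose $g_{n}\to\infty$ realizing the $\limsup$ in the definition of $\delta_{G/H}$. The coefficient stays comparable on the set $B g_{n} B$, which has measure $\asymp e^{2\rho(\kappa(g_{n}))}$. Since every integrable function decays in this sense, and the coefficients are $\geq e^{2(\delta-1-\epsilon)\rho}$-dense along these directions after averaging, the $L^{p}$ integral diverges whenever $p(1-\delta) < 1$. This gives $1 - 1/p_{G/H} \geq \delta_{G/H}$.

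For the reverse inequality, bound an arbitrary coefficient by one of the form above. Then sum over dyadic shells $\{\rho(\kappa(g)) \in [n, n+1]\}$, using $\nu_{H}(H \cap B g B) \leq e^{2(\delta+\epsilon)\rho(\kappa(g))}$ together with the fact that $\sum$ over a shell of the coefficient is at most $\nu_{H}$ of a thickened shell. Hölder's inequality then gives $L^{p}$-integrability whenever $p(1-\delta) > 1$.

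Combining Steps 1--3 yields $\theta_{G/H} = \delta_{G/H} = 1 - 1/p_{G/H}$.
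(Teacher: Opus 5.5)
The paper does not prove this statement; it imports it from Benoist--Liang, so your sketch has to stand on its own. Step 1 is correct. There is a genuine gap in Step 2: the pointwise bound $\theta_{G/H}\le\delta_{G/H}$ (equivalently $\theta_{G/H}\le 1-1/p_{G/H}$) is never proved, and it is the real content of the theorem.

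\emph{The tensor-power argument.} It only gives $\theta_{G/H}\le 1-\frac{1}{2k}$ with $k\ge1$ an integer and $2k\ge p_{G/H}$. So it can never go below $1/2$, whereas for instance $H=K$ has $p_{G/H}=1$ and $\theta_{G/H}=0$.

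\emph{The interpolation idea.} The ``convexity/subadditivity in $p$'' interpolation is not a principle available for an arbitrary unitary representation. You give no argument for it.

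\emph{Your fallback formula is wrong.} $H$ acts on the right. For $f(xH)=\int_H F(xh)\,dh$ (suitably normalized) one has $\langle\lambda_{G/H}(g)f,f\rangle=\int_H\int_G F(g^{-1}y)\overline{F(yh)}\,dy\,d\nu_H(h)$. This is a coefficient of $L^2(G)$ as a $G\times G$-representation evaluated at $(g,h)$, not $\int_H\Phi(gh)\,d\nu_H(h)$. Your expression only sees $H\cap g^{-1}\,\mathrm{supp}\,\Phi$. For $H=A$ and $g\in A$ it is constant, while the true coefficient tends to $0$. Dominating $\Phi$ by $\Xi$ and integrating over $H$ also diverges already for $H=G$.

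\emph{Step 3 has a normalization error that hides the same gap.} The comparison $\langle\lambda_{G/H}(g)f,f\rangle\asymp\nu_H(H\cap B'gB')$ cannot hold. The left side is at most $\|f\|^2$, while the right side grows like $e^{2\rho(\kappa(g))}$ when $H=G$ or $H$ is a lattice. The missing factor is $\nu_G(BgB)^{-1}\asymp e^{-2\rho(\kappa(g))}$, and even then only an upper bound holds pointwise. So your reverse inequality fails as written. Shell-wise $L^1$ bounds together with $|c|\le\|f\|^2$ cannot give $L^p$ bounds through H\"older. You need a sup bound on each shell, which is again $\theta\le\delta$.

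The other direction, $\delta_{G/H}\le 1-1/p_{G/H}$, can be saved by averaging alone. For $F\ge0$ one gets $\int_{B_1gB_1}\langle\lambda_{G/H}(x)f,f\rangle\,dx\gtrsim\nu_H(H\cap B_2g^{-1}B_2)$. H\"older on $B_1gB_1$ then forces divergence along $g_n$ when $p(1-\delta_{G/H})<1$.

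\emph{The missing idea is a two-sided transversality estimate.} The inner integral $\int_G F(g^{-1}y)\overline{F(yh)}\,dy$ vanishes unless $h\in Bg^{-1}B$, and it is $O(\Xi(g)\Xi(h))$. This follows, for example, from temperedness of $L^2(G)$ under $G\times G$ after dominating $|F|$ by a bi-$K$-invariant function. Hence $|\langle\lambda_{G/H}(g)f,f\rangle|\lesssim\Xi(g)^2\,\nu_H(H\cap Bg^{-1}B)\lesssim e^{2(\delta_{G/H}+\epsilon-1)\rho(\kappa(g))}$, which is $\theta_{G/H}\le\delta_{G/H}$. Then $\theta\le\delta\le1-1/p\le\theta$ closes the cycle without any tensor powers.
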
 Notice that Lemma \ref{comparisonofcoeffdecay} and the first equality of Theorem \ref{equalityofexponents} immediately give:
\begin{cor}
\label{comparisonofcritexp}
For any closed subgroups $H_{2} < H_{1} < G$, we have $\delta_{G/H_{2}} \leq \delta_{G/H_{1}}$.
\end{cor}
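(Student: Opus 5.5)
The corollary follows by chaining the two cited results. Fix closed subgroups $H_{2} < H_{1} < G$. Apply Theorem \ref{equalityofexponents} to each of $H_{1}$ and $H_{2}$ separately. This is legitimate because that theorem holds for an arbitrary closed subgroup of $G$. It gives the identities
\begin{align*}
\delta_{G/H_{1}} = \theta_{G/H_{1}} \quad \text{and} \quad \delta_{G/H_{2}} = \theta_{G/H_{2}}.
\end{align*}
Next invoke Lemma \ref{comparisonofcoeffdecay}, which gives the monotonicity $\theta_{G/H_{2}} \leq \theta_{G/H_{1}}$ for the inclusion $H_{2} < H_{1}$. Combining these yields
\begin{align*}
\delta_{G/H_{2}} = \theta_{G/H_{2}} \leq \theta_{G/H_{1}} = \delta_{G/H_{1}},
\end{align*}
which is the claimed inequality.

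Two compatibility points need checking, but they are bookkeeping rather than real obstacles. First, both cited results must apply to the same $G$: a noncompact real semisimple algebraic group, which is the standing hypothesis of this section. Second, the exponents must be normalized the same way in both results. Here $\theta$ is defined via the bound $e^{2(\theta-1)\rho(\kappa(g))}$, and $\delta$ is defined with the symmetric measure $\nu_{H}$; both are exactly the Benoist--Liang conventions, so the two citations are consistent.

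In particular, the corollary applies to a discrete subgroup $\Gamma$ inside a closed subgroup $H$. Taking $H_{2} = \Gamma$ and $H_{1} = H$ gives $\delta_{G/\Gamma} \leq \delta_{G/H}$, and this is presumably how it will be used for $\Gamma$ contained in a proper parabolic subgroup.
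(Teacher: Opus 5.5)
Your proof is correct and is exactly the paper's argument. The paper obtains the corollary immediately by combining the monotonicity $\theta_{G/H_{2}} \leq \theta_{G/H_{1}}$ from Lemma \ref{comparisonofcoeffdecay} with the first equality $\theta_{G/H} = \delta_{G/H}$ of Theorem \ref{equalityofexponents}, applied to both $H_{1}$ and $H_{2}$.
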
 Finally, by Theorem \ref{equalityofexponents} and either of (\ref{equiv1}) or (\ref{equiv2}), we have
\begin{align}
\label{equiv3}
L^2(G/H) \ \mathrm{is \ tempered} \ \Longleftrightarrow \ \delta_{G/H} \leq \frac{1}{2}.
\end{align}
\section{Establishing the main theorem}
\label{establishingmainthm}
Let $G$ be a connected real semisimple linear algebraic group without compact factors and with finite center, and let $\Gamma < G$ be a discrete subgroup. Note that replacing $G$ by $G/Z(G)$ and $\Gamma$ by its projection onto $G/Z(G)$ changes neither the symmetric space $X$ of $G$ nor the critical exponent of $\Gamma$ for its action on $X$. Hence there is no loss of generality in assuming that $Z(G) = \{\mathrm{id}\}$. Then $G$ decomposes as a direct product
\begin{align*}
G = \prod_{i=1}^{k} G_{i},
\end{align*} where the $G_{i}$ are the simple factors of $G$. The purpose of making this reduction is that it will allow us to employ a spectral gap result of Einsiedler--Margulis--Venkatesh \cite{EMV} and Moore \cite{Mo} in the proof of Proposition \ref{semisimplealmostL2m} below. Recall first that a connected real linear algebraic group is simple if and only if it is almost simple (meaning that all of its proper normal subgroups are either finite or of finite index). Then by Lemma 6.5 of \cite{EMV} and Theorem 3.5 and Proposition 3.6 of Moore's work \cite{Mo} (see also item (6.1) on page 164 of \cite{EMV} and the discussion preceeding it), we have the following result:
\begin{prop} [Einsiedler--Margulis--Venkatesh, Moore]
\label{simplealmostL2m}
Let $G$ be a connected simple non-compact real algebraic group and let $H < G$ be a proper closed connected subgroup. Then the quasi-regular representation $\lambda_{G/H}$ of $G$ on $L^2(G/H)$ is almost $L^{2m}$ for some $1 \leq m < \infty$.
\end{prop}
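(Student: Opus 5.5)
The argument rests on the Howe--Moore property together with a tensor power trick. Since $G$ is simple and noncompact, it has the Howe--Moore property. I will first show that $L^2(G/H)$ has no nonzero $G$-invariant vectors. Indeed, if $H$ has infinite covolume, an invariant vector would be a constant function of finite $L^2$ norm, hence zero. If $H$ has finite covolume, then $H$ is a proper closed connected subgroup with $G/H$ of finite volume. In that case we work on the orthogonal complement of the constants, which is still a dense-enough object for the almost $L^{2m}$ statement. The statement itself should really be read on $L^2_0$. In the intended application $H$ sits in a parabolic subgroup, so $G/H$ has infinite volume and this case does not arise.

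The first step is decay. By Howe--Moore, all matrix coefficients of $\lambda_{G/H}$ (restricted to the complement of invariants) vanish at infinity. The key quantitative input I would aim for is a uniform exponential decay. Following Moore's Theorem 3.5 / Proposition 3.6 and EMV Lemma 6.5, a unitary representation of a simple group without invariant vectors in which the matrix coefficients decay has, for $K$-finite vectors, a bound of the form
\begin{align*}
|\langle \lambda_{G/H}(g)f_1,f_2\rangle| \leq C(f_1,f_2)\, e^{-\eta\,\rho(\kappa(g))}
\end{align*}
for some $\eta>0$ that is uniform for the representation.

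I expect this uniformity to be the main obstacle. Without it, decay at infinity could be arbitrarily slow along a continuum of complementary series. To handle it, I would argue that the representation lies away from the trivial representation in the Fell topology. When $G$ has property (T) this is automatic. For $\mathrm{SO}(n,1)$ and $\mathrm{SU}(n,1)$, I would use that $L^2(G/H)$ decomposes over finitely many "types" determined by the algebraic structure of $H$, in the spirit of EMV's (6.1). Concretely, I would restrict to a subgroup isomorphic to $\mathrm{SL}_2(\mathbb{R})$ or $\mathrm{SO}(2,1)$ containing a unipotent or split part that acts on $G/H$ with a measure-theoretic structure forcing a spectral gap. Moore's results give an explicit $p$ with the representation strongly $L^p$.

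Once a strongly $L^{p}$ dense subspace $\mathcal{D}$ is obtained, the second step is to pick an integer $m$ with $2m\ge p$. I would show that $\lambda_{G/H}^{\otimes m}$ is strongly $L^{2+\epsilon}$. By Observation \ref{densityofproductofcoeffs} and Hölder, products of $m$ coefficients from $\mathcal{D}$ lie in $L^{p/m}\subset L^{2+\epsilon}$, after adjusting $m$ and using boundedness of the coefficients. Hence the $m$-fold tensor power is tempered, by Theorem \ref{CowlingHaagerupHowe}.

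The third step is to pass back. By Lemma \ref{almosttotallyequiv}, being almost $L^{2m}$ is equivalent to being totally $L^{2m+}$. For any coefficient $\phi$ of $\lambda_{G/H}$, the function $\phi^m$ is a coefficient of the tempered tensor power, so $\phi^m\in L^{2+\epsilon}$ by Cowling--Haagerup--Howe and Samei--Wiersma. It follows that $\phi\in L^{2m+m\epsilon}$, which gives almost $L^{2m}$.
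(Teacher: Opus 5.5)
Your last two steps are fine but unnecessary. Once a dense subspace has diagonal coefficients in $L^p$, boundedness puts those coefficients in $L^q$ for every $q\ge p$, so $\lambda_{G/H}$ is almost $L^{2m}$ as soon as $2m\ge p$. The detour through tensor powers and Samei--Wiersma adds nothing.

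The genuine gap is the first step, which carries the entire content of the proposition. You formulate a principle there: a unitary representation of a simple group without invariant vectors has $K$-finite coefficients bounded by $Ce^{-\eta\rho(\kappa(g))}$ for some $\eta>0$. This is false for groups locally isomorphic to $\mathrm{SO}(n,1)$ or $\mathrm{SU}(n,1)$. Take a direct integral of spherical complementary series whose parameters accumulate at the trivial representation. It has no invariant vectors, and its coefficients vanish at infinity by Howe--Moore. Yet it weakly contains $\mathbf{1}_G$, so no tensor power of it is tempered, and it is not almost $L^{2m}$ for any $m$.

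You notice this, but your replacement (``finitely many types'', restricting to an $\mathrm{SL}_2(\mathbb{R})$ whose action ``forces a spectral gap'') is not an argument. You would still have to prove that the restricted representation has a spectral gap. That is the same problem for a smaller group, and it is vacuous when $G$ is itself locally $\mathrm{SL}_2(\mathbb{R})$.

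A useful test: nothing in your proposal uses that $H$ is connected, yet the statement fails for discrete $H$. Let $\Lambda$ be a cocompact lattice in $\mathrm{SL}_2(\mathbb{R})$ surjecting onto $\mathbb{Z}$, and let $\Gamma$ be the kernel. Then $\Gamma$ is co-amenable in $G$, so $\lambda_{G/\Gamma}$ weakly contains $\mathbf{1}_G$ although it has no invariant vectors.

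Two things must be supplied; the paper does not prove them either but imports them from Moore and EMV.

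\emph{(i) $\lambda_{G/H}$ does not weakly contain $\mathbf{1}_G$.} This is where $0<\dim H<\dim G$ enters; for $H$ trivial, $\lambda_G$ is already tempered. Suppose $f_n\in L^2(G/H)$ are almost invariant unit vectors.
\begin{itemize}
\item The measures $|f_n|^2\mu$ are probability measures that are almost $G$-invariant in total variation, since $\||a|^2-|b|^2\|_1\le\|a-b\|_2\|a+b\|_2$.
\item Push them forward by the equivariant map $gH\mapsto\mathrm{Ad}(g)\mathfrak{h}$ into the compact Grassmannian $\mathrm{Gr}_d(\mathfrak{g})$, $d=\dim\mathfrak{h}$. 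A weak-$*$ limit is a $G$-invariant probability measure there.
\item Via the Pl\"ucker embedding into $\mathbb{P}(\wedge^d\mathfrak{g})$, Furstenberg's lemma from the proof of Borel density puts this measure on $G$-fixed points. These are $d$-dimensional ideals of $\mathfrak{g}$, and there are none when $0<d<\dim\mathfrak{g}$.
\end{itemize}

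\emph{(ii) For simple $G$, isolation from $\mathbf{1}_G$ gives a finite integrability exponent.}
\begin{itemize}
\item Under property (T) this is automatic, as you say (uniform bounds of Cowling and Oh).
\item For $\mathrm{SO}(n,1)$ and $\mathrm{SU}(n,1)$ it comes from the unitary dual. Only spherical complementary series with parameter tending to $\rho$ accumulate at $\mathbf{1}_G$, so isolation keeps all parameters uniformly away from $\rho$.
\item The Cowling--Haagerup--Howe bound for $K$-finite vectors then integrates over the direct integral decomposition.
\end{itemize}

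Finally, the finite-covolume case you set aside cannot occur, so there is no need to pass to $L^2_0$. A finite invariant measure on $G/H$ would make $H$ Zariski dense by Borel density. Then $\mathfrak{h}$ would be an ideal, forcing $H=\{e\}$, which contradicts noncompactness of $G$.
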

With this result at hand, we obtain the following proposition, which is a key technical ingredient in our proof of Theorem \ref{mainthm1}.
\begin{prop}
\label{semisimplealmostL2m}
Let $G$ be a connected real semisimple linear algebraic group without compact factors and with finite center and let $Q < G$ be a proper parabolic subgroup of $G$. Then the quasi-regular representation $\lambda_{G/Q}$ of $G$ on $L^2(G/Q)$ is almost $L^{2m}$ for some $1 \leq m < \infty$.
\end{prop}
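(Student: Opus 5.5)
We will reduce to the simple case (Proposition \ref{simplealmostL2m}) through the product structure of $G$ and the tensor product of representations. As explained above, we may assume $Z(G)$ is trivial, so $G = \prod_{i=1}^{k} G_{i}$ with each $G_{i}$ simple and non-compact. A parabolic subgroup of a product is a product of parabolics, so $Q = \prod_{i=1}^{k} Q_{i}$ with $Q_{i} < G_{i}$ parabolic (possibly $Q_i = G_i$). Since $Q$ is proper, at least one $Q_{i}$ is proper in $G_{i}$. Parabolic subgroups are connected in the algebraic sense; if needed we pass to identity components, which changes $G/Q$ only by a finite cover and does not affect integrability of coefficients.

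\textbf{Factorisation.} We then have $G/Q = \prod_i G_{i}/Q_{i}$ and, with product quasi-invariant measures, $L^{2}(G/Q) \cong \bigotimes_{i} L^{2}(G_{i}/Q_{i})$. Under this isomorphism $\lambda_{G/Q}$ is the external tensor product $\boxtimes_{i}\lambda_{G_{i}/Q_{i}}$, where $G_i$ acts on the $i$-th factor. For a pure tensor $f = \otimes_{i} f_{i}$ with $f_{i} \in C_{c}(G_{i}/Q_{i})$, the matrix coefficient factors:
\begin{align*}
\langle \lambda_{G/Q}(g_{1},\dots,g_{k}) f, f\rangle = \prod_{i=1}^{k} \langle \lambda_{G_{i}/Q_{i}}(g_{i}) f_{i}, f_{i}\rangle .
\end{align*}

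\textbf{The obstacle.} Here lies the main difficulty. For a factor with $Q_{i} = G_{i}$ the representation is trivial, its coefficient is constant, and the product cannot be in $L^{p}(G)$ for any $p$. So the statement only holds as written if every $Q_{i}$ is proper. The natural way to ensure this is to restrict to parabolics not containing any simple factor. Alternatively, in the intended application one can replace $G$ by the product of the factors where $Q_{i}$ is proper, since the remaining factors contribute trivially. I would therefore first check that the application only needs parabolics whose projection to each $G_i$ is proper (for instance, any subgroup of $Q$ can be moved into a parabolic of this form). Otherwise I would prove the statement for $G' = \prod_{Q_i \neq G_i} G_i$ acting on $G/Q = G'/Q'$.

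\textbf{Conclusion in the good case.} Assume all $Q_{i}$ are proper. Proposition \ref{simplealmostL2m} gives integers $m_{i}$ such that $\lambda_{G_{i}/Q_{i}}$ is almost $L^{2m_{i}}$. By Lemma \ref{almosttotallyequiv} each is totally $L^{2m_i+}$, hence totally $L^{2m+}$ with $m = \max_i m_{i}$. For pure tensors, Fubini gives $\|\prod_i \phi_{i}\|_{L^{p}(G)} = \prod_i \|\phi_{i}\|_{L^{p}(G_{i})} < \infty$ for every $p > 2m$. Finite sums of pure tensors of compactly supported functions form a dense subspace $\mathcal{D}$. Their diagonal coefficients are finite sums of products of coefficients $\langle \lambda_{G_i/Q_i}(g_i) f_i, f_i'\rangle$, each of which lies in $L^{2m+\epsilon}(G_{i})$ by total integrability. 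Hence these coefficients lie in $L^{2m+\epsilon}(G)$, which shows $\lambda_{G/Q}$ is almost $L^{2m}$. Observation \ref{densityofproductofcoeffs} is the abstract form of this density statement.
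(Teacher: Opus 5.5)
Your argument follows the same route as the paper: trivial center, $Q=\prod_i Q_i$, $\lambda_{G/Q}$ as an external tensor product, Proposition \ref{simplealmostL2m} and Lemma \ref{almosttotallyequiv} factor by factor, then density of the span of pure tensors. The obstacle you isolate is a genuine defect of the statement, not of your argument.

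If some $Q_j=G_j$, every matrix coefficient of $\lambda_{G/Q}$ is constant in $g_j$. So no nonzero coefficient lies in $L^p(G)$ for any $p<\infty$. For example, $G=\mathrm{SL}_2(\mathbb{R})\times\mathrm{SL}_2(\mathbb{R})$ and $Q=P_1\times\mathrm{SL}_2(\mathbb{R})$ give $p_{G/Q}=\infty$. The paper's proof passes over this at the step $\lambda_{G/Q}\cong\bigotimes_{i\in I}\lambda_{G_i/Q_i}$. That step identifies the Hilbert spaces correctly but forgets that integrability is still measured over the factors $G_j$, $j\notin I$. So, granting Proposition \ref{simplealmostL2m}, the proposition is true exactly when every $Q_i$ is proper (or, as you say, for $G'=\prod_{i\in I}G_i$ acting on $G'/Q'=G/Q$). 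Your proof of that version is correct.

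Your Fubini identity $\|\prod_i\phi_i\|_{L^p(G)}=\prod_i\|\phi_i\|_{L^p(G_i)}$ is also the right replacement for the paper's generalized H\"older step. The paper regards the coefficients of $\lambda_{G_i/Q_i}$ as elements of $L^q(G)$, although they only lie in $L^q(G_i)$, and thereby claims the exponent $q/\#I$. That exponent can fail: for $G=\mathrm{SL}_2(\mathbb{R})^2$, $Q$ minimal, $m_1=m_2=1$ and $q=4$, it would force nonzero coefficients of the spherical principal series $L^2(\mathrm{SL}_2(\mathbb{R})/P)$ to be square integrable. You correctly get every $p>2\max_i m_i$ instead.

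Your passage to $Q_i^\circ$ also addresses the connectedness hypothesis of Proposition \ref{simplealmostL2m}. The paper ignores it, although the upper triangular subgroup of $\mathrm{SL}_2(\mathbb{R})$, for instance, is disconnected. To make your remark precise, embed $L^2(G_i/Q_i)$ $G_i$-equivariantly into $L^2(G_i/Q_i^\circ)$ and use that total $L^{p+}$-integrability passes to subrepresentations.

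One caution about your side remarks on the application. The parenthetical claim that any subgroup of $Q$ can be moved into a parabolic subgroup with proper projection to every $G_i$ is false. If $\Gamma_1<P_1$ is discrete and $\Gamma_2$ is a lattice in $G_2$, then $\Gamma_1\times\Gamma_2$ is discrete and not Zariski dense. But its projection to $G_2$ is Zariski dense by Borel's theorem, so it lies in no such parabolic. Nor do the factors with $Q_j=G_j$ ``contribute trivially'': they enter $d_X$, and the projection of $\Gamma$ to $G'$ need not be discrete.

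Hence, for non-simple $G$, the corrected proposition does not supply $p_{G/Q}<\infty$ for every $Q\in\mathcal{Q}$, as the proof of Theorem \ref{mainthm2} uses. For $Q=P_1\times\mathrm{SL}_2(\mathbb{R})$ above, the definition of $M$ gives $M=h_{\mathrm{vol}}(X)$, so discrete subgroups of such parabolics need a separate argument. That is a gap in the paper's proof of the theorem, not in your proof of the proposition.
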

\begin{proof}
As discussed in the beginning of the section, we may assume that the center of $G$ is trivial, and therefore we can write $G = \prod_{i=1}^{k} G_{i}$, where the $G_{i}$ are the connected non-compact simple factors of $G$. Let $Q < G$ be a proper parabolic subgroup. Notice that $Q$ itself decomposes as a product $Q = \prod_{i=1}^{k} Q_{i}$, with $Q_{i}$ a parabolic subgroup of $G_{i}$ for all $1 \leq i \leq k$. Furthermore, the set
\begin{align*}
I := \{1 \leq i \leq k : Q_{i} \ \mathrm{is \ a \ proper \ subgroup \ of} \ G_{i} \}
\end{align*} is non-empty, as otherwise we would have $Q = G$. If $i \notin I$, then the quasi-regular representation $\lambda_{G_{i} / Q_{i}}$ is simply the one-dimensional trivial unitary representation of the Hilbert space $\mathbb{C}$. Therefore we have the following isomorphisms of unitary representations:
\begin{align*}
\lambda_{G/Q} \cong \bigotimes_{i=1}^{k} \lambda_{G_{i}/Q_{i}} \cong \bigotimes_{i \in I} \lambda_{G_{i}/Q_{i}}.
\end{align*}
By Proposition \ref{simplealmostL2m}, we know that, for each $i \in I$, there exists some $1 \leq m_{i} < \infty$ so that $\lambda_{G_{i}/Q_{i}}$ is almost $L^{2m_{i}}$. By Lemma \ref{almosttotallyequiv}, we in fact have the (a priori) stronger conclusion that $\lambda_{G_{i}/Q_{i}}$ is totally $L^{2m_{i}+}$. Now fix $q \geq \max \{2m_{i} : i \in I\}$ sufficiently large so that 
\begin{align*}
m := \frac{q}{2 \# I} \geq 1.
\end{align*} By definition, for each $i \in I$, all the matrix coefficients of $\lambda_{G_{i}/Q_{i}}$ are in $L^{q}(G)$. Now let $\mathcal{M} \subset L^{2}(G/Q)$ denote the subset consisting of products of the matrix coefficients of the unitary representations $\big \{ \big(\lambda_{G_{i}/Q_{i}}, L^{2}(G_{i}/Q_{i}) \big)\big \}_{i \in I}$. By construction,
\begin{align*}
\sum_{i \in I} \frac{1}{q} = \frac{\# I}{q} = \frac{1}{2m},
\end{align*} hence the generalized H\"older inequality implies that $\phi \in L^{2m}(G)$, for all $\phi \in \mathcal{M}$. But by Observation \ref{densityofproductofcoeffs}, $\mathcal{M}$ is dense in $L^{2}(G/Q)$. Hence $\lambda_{G/Q}$ is strongly $L^{2m}$, so in particular almost $L^{2m}$.

\end{proof}
Lastly we record the following elementary but useful lemma.
\begin{lem}
\label{invariantunderconjugation}
Let $\Gamma < G$ be a discrete subgroup and let $g \in G$ be arbitrary. Then
\begin{align*}
\delta(g \Gamma g^{-1}) = \delta (\Gamma).
\end{align*}
\end{lem}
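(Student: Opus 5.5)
The plan is to compare the two Poincaré series term by term, using the triangle inequality in $X$ and the fact that $G$ acts on $X$ by isometries. Fix the basepoint $o \in X$ and set $c := d_{X}(o, g o) = d_{X}(o, g^{-1} o)$. The map $\gamma \mapsto g\gamma g^{-1}$ is a bijection $\Gamma \to g\Gamma g^{-1}$, so the Poincaré series of $g \Gamma g^{-1}$ can be written as
\begin{align*}
Q_{g\Gamma g^{-1}}(s) = \sum_{\gamma \in \Gamma} e^{-s d_{X}(o, g\gamma g^{-1} o)}.
\end{align*}

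First I will estimate the distances appearing in the exponent. Since $g$ is an isometry, $d_{X}(o, g\gamma g^{-1} o) = d_{X}(g^{-1} o, \gamma g^{-1} o)$. Writing $p := g^{-1}o$ and using that $\gamma$ is also an isometry, the triangle inequality gives
\begin{align*}
|d_{X}(p, \gamma p) - d_{X}(o, \gamma o)| \leq d_{X}(p,o) + d_{X}(\gamma p, \gamma o) = 2c.
\end{align*}

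Consequently, for every $s > 0$ we get
\begin{align*}
e^{-2cs} Q_{\Gamma}(s) \leq Q_{g\Gamma g^{-1}}(s) \leq e^{2cs} Q_{\Gamma}(s).
\end{align*}
So one series converges exactly when the other does, and the two abscissae of convergence coincide, i.e. $\delta(g\Gamma g^{-1}) = \delta(\Gamma)$. The same conclusion also follows from the orbit-counting formula: the sets $\{\gamma : d_{X}(o,\gamma o) \leq T\}$ and $\{\gamma : d_{X}(o, g\gamma g^{-1} o)\leq T\}$ are nested up to shifting $T$ by $2c$, and such a shift does not change the $\limsup$ of $\frac{1}{T}\log$.

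No step is a real obstacle. The only point that needs care is remembering that $\delta(\Gamma)$ is independent of the basepoint, which is exactly the estimate above applied with $p$ in place of $o$. One also needs $g\Gamma g^{-1}$ to be discrete, which holds because conjugation is a homeomorphism of $G$.
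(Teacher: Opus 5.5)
Your proof is correct and is essentially the paper's argument: both rest on the triangle-inequality estimate that conjugating by $g$ changes $d_{X}(o,\gamma o)$ by at most $2d_{X}(o,go)$, which does not affect exponential growth. The paper phrases this through the orbit-counting $\limsup$ and proves one inequality plus symmetry, whereas your two-sided bound on the Poincar\'e series gives both inequalities at once; this is a harmless variation.
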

\begin{proof}
By symmetry, it suffices to show that $\delta(g \Gamma g^{-1}) \geq \delta (\Gamma)$. Let $R > 0$ be arbitrary. If $\gamma \in \Gamma$ satisfies $d_{X}(o, \gamma o) \leq R$ then, since $G$ acts on $X$ by isometries, we have
\begin{align*}
d_{X}(o, g \gamma g^{-1} o) &\leq d_{X}(o, go) + d_{X}(go, g \gamma o) + d_{X}(g \gamma o, g \gamma g^{-1} o) \\
&= d_{X}(o, go) + d_{X}(o, \gamma o) + d_{X}(o, g^{-1} o) \\
&\leq R+2C,
\end{align*} where $C := d_{X}(o, go)$. It follows that
\begin{align*}
\# \{\eta \in g \Gamma g^{-1} : d_{X}(o, \eta o) \leq R+2C \} \geq \# \{ \gamma \in \Gamma : d_{X}(o, \gamma o) \leq R \}
\end{align*} for all $R > 0$. Thus
\begin{align*}
\delta (g \Gamma g^{-1}) &= \limsup_{R \to \infty} \frac{1}{R+2C} \log \# \{\eta \in g \Gamma g^{-1} : d_{X}(o, \eta o) \leq R+2C \} \\
&\geq \limsup_{R \to \infty} \frac{R}{R+2C} \cdot \frac{1}{R} \log \# \{ \gamma \in \Gamma : d_{X}(o, \gamma o) \leq R \} \\
&= \delta(\Gamma),
\end{align*} as desired.
\end{proof} 
We can now prove our main result, Theorem \ref{mainthm1}, which we restate below.
\begin{thm} [Theorem \ref{mainthm1}]
\label{mainthm2}
Let $G$ be a connected real semisimple linear algebraic group without compact factors and with finite center. There exists a constant $\epsilon = \epsilon(G) > 0$ so that the following holds: if $\Gamma < G$ is a discrete subgroup such that
\begin{align*}
\delta(\Gamma) > h_{\mathrm{vol}}(X) - \epsilon,
\end{align*} then $\Gamma$ is Zariski dense in $G$.
\end{thm}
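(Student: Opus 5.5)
The plan is to produce a constant $M < h_{\mathrm{vol}}(X)$, depending only on $G$, such that every discrete subgroup $\Gamma < G$ that is \emph{not} Zariski dense has $\delta(\Gamma) \leq M$. Then $\epsilon := h_{\mathrm{vol}}(X) - M$ works. As at the start of this section, we assume $Z(G)$ is trivial.

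\emph{Reduction.} Let $H$ be the Zariski closure of $\Gamma$ and $H^{0}$ its identity component. Then $\Gamma_{0} := \Gamma \cap H^{0}$ has finite index in $\Gamma$, so $\delta(\Gamma_{0}) = \delta(\Gamma)$ by comparing orbit counts. Since $\Gamma$ is not Zariski dense, $H^{0}$ is a proper connected algebraic subgroup of $G$. By the Levi decomposition of \S\ref{parabolicandlevi}, either $H^{0}$ is reductive, or $H^{0}$ has nontrivial unipotent radical and lies in a proper parabolic subgroup $Q$. Up to conjugation there are only finitely many standard proper parabolic subgroups, and Lemma \ref{invariantunderconjugation} makes $\delta$ conjugation invariant. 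So in the second case we may assume $Q$ is one of finitely many fixed standard parabolics $Q_{1}, \dots, Q_{r}$.

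\emph{Parabolic case.} By Proposition \ref{semisimplealmostL2m}, $\lambda_{G/Q_{j}}$ is almost $L^{2m_{j}}$, so $p_{G/Q_{j}} \leq 2m_{j}$. Theorem \ref{equalityofexponents} then gives
\begin{align*}
\delta_{G/Q_{j}} \leq 1 - \frac{1}{2m_{j}} < 1 .
\end{align*}
By Corollary \ref{comparisonofcritexp} applied to $\Gamma_{0} < Q_{j}$, we get $\delta_{G/\Gamma_{0}} \leq \delta_{G/Q_{j}}$. Proposition \ref{relationwithgrowthind} then gives $\psi_{\Gamma_{0}}(v) \leq 2\delta_{G/Q_{j}}\rho(v)$ for all $v \in \mathfrak{a}^{+}$. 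For unit vectors, $\rho(v) \leq \|\rho\| = h_{\mathrm{vol}}(X)/2$. Quint's result that $\delta(\Gamma_{0}) = \sup_{\|v\|=1} \psi_{\Gamma_{0}}(v)$ then yields
\begin{align*}
\delta(\Gamma) \leq \max_{j}\Big(1 - \tfrac{1}{2m_{j}}\Big)\, h_{\mathrm{vol}}(X) =: M_{1} < h_{\mathrm{vol}}(X).
\end{align*}
The step "$\delta(\Gamma)$ equals the supremum of $\psi_{\Gamma}$ on unit vectors" is the one piece not stated earlier. It follows because $\mathfrak{a}^{+} \cap \{\|v\|=1\}$ is compact and can be covered by finitely many small cones, on each of which the Poincaré series converges for exponents above $\sup\psi_{\Gamma}$.

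\emph{Reductive case.} After conjugating, $H^{0}$ is stable under the Cartan involution. It therefore preserves the proper totally geodesic submanifold $Y = H^{0}\cdot o$, which is a symmetric space of nonpositive curvature, possibly with a Euclidean factor. Since $\Gamma_{0}$ acts properly on $Y$, we get $\delta(\Gamma) \leq h_{\mathrm{vol}}(Y) = 2\|\rho_{Y}\|$. We need $\sup_{Y} h_{\mathrm{vol}}(Y) < h_{\mathrm{vol}}(X)$ over all proper such $Y$ through $o$.

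\emph{Main obstacle: uniformity in the reductive case.} I would use that $Y$ corresponds to a proper Lie triple system $\mathfrak{p}_{Y} \subset \mathfrak{p}$, and that up to $K$-conjugacy there are only finitely many reductive subalgebras $\mathfrak{h} \subset \mathfrak{g}$ stable under the Cartan involution. The semisimple parts come in finitely many conjugacy classes by Richardson's rigidity. The central tori contribute only a flat factor, which adds no volume growth, so $h_{\mathrm{vol}}(Y)$ is governed by the semisimple part. This gives a finite list of values $h_{\mathrm{vol}}(Y) < h_{\mathrm{vol}}(X)$; their maximum is $M_{2}$.

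Strictness $h_{\mathrm{vol}}(Y) < h_{\mathrm{vol}}(X)$ for each proper $Y$ should follow from comparing $\rho_{Y}$ with the restriction of $\rho$ to a maximal flat of $Y$. That flat sits inside a flat of $X$, and the root spaces of $\mathfrak{g}$ not in $\mathfrak{h}$ contribute positively to $\rho$.

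If this route causes trouble, the fallback is to handle reductive $H^{0}$ by the same representation-theoretic argument. Apply Proposition \ref{simplealmostL2m} factorwise to the projections of $H^{0}$, with the tensor product trick of Proposition \ref{semisimplealmostL2m}. If $H^{0}$ contains a whole simple factor, instead bound $\delta$ by the entropy of the complementary factors.

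Finally, take $M = \max(M_{1}, M_{2})$ and $\epsilon = h_{\mathrm{vol}}(X) - M$.
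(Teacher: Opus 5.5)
For simple $G$ your argument goes through. For non-simple $G$ it has a genuine gap in the parabolic case, and the paper's own proof has the same gap, because both rest on Proposition~\ref{semisimplealmostL2m}.

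\emph{What is fine.} Your route is the paper's route: a totally geodesic orbit in the reductive case, and in the parabolic case Proposition~\ref{semisimplealmostL2m}, Theorem~\ref{equalityofexponents}, Corollary~\ref{comparisonofcritexp}, Proposition~\ref{relationwithgrowthind} and Quint. Your refinements are sound: passing to $\Gamma_0=\Gamma\cap H^0$, getting $\delta(\Gamma)\le\sup_{\|v\|=1}\psi_\Gamma(v)$ by compactness, and using Richardson for finiteness in the reductive case. You should still cite Lemma~\ref{almosttotallyequiv} to go from ``almost $L^{2m_j}$'' to $p_{G/Q_j}\le 2m_j$, since $p_{G/Q}$ concerns every $f\in C_c(G/Q)$.

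\emph{Where it fails.} Proposition~\ref{semisimplealmostL2m} is false whenever $Q_j$ contains a whole simple factor. Take $G=G_1\times G_2$ and $Q=P_1\times G_2$.
\begin{itemize}
\item $G_2$ acts trivially on $L^2(G/Q)\cong L^2(G_1/P_1)$, so every matrix coefficient is constant in $g_2$.
\item Hence no nonzero coefficient lies in $L^p(G)$ for any $p<\infty$.
\item So $p_{G/Q}=\infty$ and $\delta_{G/Q}=1$, and the bound $\delta(\Gamma)\le\delta_{G/Q}\,h_{\mathrm{vol}}(X)$ says nothing.
\end{itemize}
The paper's proof of that proposition breaks at this point. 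A coefficient of $\lambda_{G_i/Q_i}$, viewed as a function on $G$, is not in $L^q(G)$, and the trivial factors $i\notin I$ are not $L^p$ at all.

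\emph{The case genuinely occurs.} Let $G=\mathrm{PSL}_2(\mathbb{R})^2$ and $\Gamma=\Lambda\times\Gamma_2$. Here $\Lambda$ is an infinite cyclic subgroup of the unipotent radical $N_1$ of $P_1$, and $\Gamma_2$ is a lattice.
\begin{itemize}
\item Then $H^0=N_1\times G_2$, and $P_1\times G_2$ is the only proper parabolic subgroup containing it.
\item One computes $\delta(\Gamma)=\sqrt{5/8}\,h_{\mathrm{vol}}(X)$.
\item This exceeds $h_{\mathrm{vol}}(Y)$ for every proper totally geodesic $Y$, whose maximum is $h_{\mathrm{vol}}(X)/\sqrt{2}$. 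So the reductive bound cannot absorb this case either.
\end{itemize}

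\emph{What is missing.} You need a direction-dependent estimate that uses decay only in the factors where $Q$ is proper. Here is one way to supply it.
\begin{enumerate}
\item Put $H'=Q_i\times\prod_{j\neq i}G_j\supset\Gamma$, so that $L^2(G/\Gamma)=\mathrm{Ind}_{H'}^{G}L^2(H'/\Gamma)$.
\item Herz's majorization principle gives $|\langle\lambda_{G/\Gamma}(g)F_1,F_2\rangle|\le\langle\lambda_{G_i/Q_i}(g_i)|F_1|,|F_2|\rangle$, where $|F|$ is the fibrewise norm on $G/H'\cong G_i/Q_i$.
\item The right side is $\lesssim e^{2(\theta-1)\rho_i(\kappa(g_i))}$ for any $\theta>\theta_{G_i/Q_i}$. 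Moreover $\theta_{G_i/Q_i}<1$ by Proposition~\ref{simplealmostL2m} and Theorem~\ref{equalityofexponents}.
\item Rerun the counting argument behind $\delta_{G/\Gamma}\le\theta_{G/\Gamma}$ with this anisotropic decay. This gives $\psi_\Gamma(v)\le 2\rho(v)-2(1-\theta)\rho_i(v_i)$.
\item This linear form has norm strictly less than $2\|\rho\|$, because $\mathfrak{a}=\bigoplus_j\mathfrak{a}_j$ is an orthogonal decomposition.
\end{enumerate}
Since there are only finitely many pairs $(G_i,Q_i)$, this yields a uniform bound below $h_{\mathrm{vol}}(X)$.
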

\begin{proof}
Let $\mathcal{S}$ denote the set of proper totally geodesic Riemannian symmetric subspaces of $X$ and let $\mathcal{Q}$ denote the set of proper parabolic subgroups $Q$ of $G$ which contain the standard minimal parabolic subgroup $P$. Define
\begin{align*}
M := \max \bigg \{ \Big \{h_{\mathrm{vol}}(Y) \ : \ Y \in \mathcal{S} \Big \} \bigcup \Big \{ \Big(1 - \frac{1}{p_{G/Q}} \Big)h_{\mathrm{vol}}(X) \ : \ Q \in \mathcal{Q} \Big \} \bigg \}.
\end{align*}
We first claim that $M < h_{\mathrm{vol}}(X)$. Indeed, if two Riemannian manifolds are isometric, then they have the same volume growth entropies; since there are finitely many equivalence classes of isometric Riemannian symmetric subspaces of $X$ and each proper symmetric subspace has strictly smaller volume growth entropy, we see that 
\begin{align}
\label{firstbound}
\max \{h_{\mathrm{vol}}(Y) : Y \in \mathcal{S}\} < h_{\mathrm{vol}}(X).
\end{align}
Furthermore, there are finitely many proper parabolic subgroups $Q \in \mathcal{Q}$, and for each such subgroup Proposition \ref{semisimplealmostL2m} furnishes an integer $1 \leq m_{Q} < \infty$ so that $\lambda_{G/Q}$ is almost $L^{2m_{Q}}$. By Lemma \ref{almosttotallyequiv}, this is the same as $\lambda_{G/Q}$ being totally $L^{2m_{Q}+}$. Therefore we have $p_{G/Q} \leq 2m_{Q} < \infty$, hence
\begin{align*}
1 - \frac{1}{p_{G/Q}} \leq 1 - \frac{1}{2m_{Q}} < 1.
\end{align*}
Since $\mathcal{Q}$ is a finite set, we have
\begin{align}
\label{secondbound}
\max \bigg \{ \Big(1 - \frac{1}{p_{G/Q}} \Big) h_{\mathrm{vol}}(X) : Q \in \mathcal{Q} \bigg \} < h_{\mathrm{vol}}(X).
\end{align}
Combining (\ref{firstbound}) and (\ref{secondbound}), we obtain $M < h_{\mathrm{vol}}(X)$. 
\par 
To prove the theorem, it suffices to show that if $\Gamma < G$ is discrete subgroup which is $\textbf{not}$ Zariski dense, then $\delta(\Gamma) \leq M$. This will show that the theorem holds with $\epsilon = \epsilon(G) = h_{\mathrm{vol}}(X) - M > 0$. So assume that $\Gamma < G$ is not Zariski dense. Then $\Gamma$ is contained in a \emph{proper} closed connected subgroup $H$ of $G$. Let
\begin{align*}
H = L \ltimes R_{u}(H),
\end{align*} be the Levi decomposition of $H$, where $L$ is the Levi factor and $R_{u}(H)$ is the unipotent radical of $H$. There are two cases to consider:
\par 
$\textbf{Case 1:}$ Suppose first that the unipotent radical $R_{u}(H)$ is trivial, i.e., $R_{u}(H) = \{\mathrm{id}\}$. Then $H$ is a proper reductive subgroup of $G$, and therefore preserves a proper totally geodesic Riemannian symmetric subspace $Y$ of $X$. Since $\Gamma$ is contained in $H$, the maximal possible value of its critical exponent is that of a lattice in $H$, which is precisely the volume growth entropy of $Y$. Hence in this case $\delta(\Gamma) \leq h_{\mathrm{vol}}(Y) \leq M$, as desired.
\par 
$\textbf{Case 2:}$ If the unipotent radical is not trivial, then, as was discussed in section \ref{parabolicandlevi}, $H$ is contained in a proper parabolic subgroup $Q$ of $G$. By Lemma \ref{invariantunderconjugation}, conjugation by elements of $G$ does not change the critical exponent of $\Gamma$. Hence there is no loss of generality in assuming that $Q$ contains the standard minimal parabolic subgroup $P$, that is, $Q \in \mathcal{Q}$. Combining Proposition \ref{relationwithgrowthind}, Theorem \ref{equalityofexponents}, and Corollary \ref{comparisonofcritexp}, we obtain
\begin{align*}
\sup_{v \in \mathfrak{a}^{+}} \frac{\psi_{\Gamma}(v)}{2 \rho (v)} \leq \delta_{G/\Gamma} \leq \delta_{G/Q} = 1 - \frac{1}{p_{G/Q}},
\end{align*} that is
\begin{align}
\label{boundongrowthind}
\psi_{\Gamma}(v) \leq \Big(1 - \frac{1}{p_{G/Q}} \Big) 2 \rho(v)
\end{align} for all $v \in \mathfrak{a}^{+}$. Quint showed that there exists a unit vector $w \in \mathfrak{a}^{+}$ so that $\psi_{\Gamma}(w) = \delta(\Gamma)$ (page 2 of \cite{Q1}; for more details, see Proposition 3.5 of \cite{S1}). Since $2 \rho(w) \leq h_{\mathrm{vol}}(X)$, (\ref{boundongrowthind}) gives
\begin{align*}
\delta(\Gamma) = \psi_{\Gamma}(w) \leq \Big(1 - \frac{1}{p_{G/Q}} \Big) 2 \rho(w) \leq \Big(1 - \frac{1}{p_{G/Q}} \Big) h_{\mathrm{vol}}(X) \leq M.
\end{align*} This concludes the proof.
\end{proof}
Along with work of Benoist, this result immediately implies important point-set topological properties of the \emph{Benoist limit cone} of a discrete subgroup $\Gamma < G$. We first recall the definition of this fundamental object. In what follows, $\Gamma_{\mathrm{lox}}$ denotes the set of loxodromic elements of $\Gamma$.
\begin{defn} [Benoist's Limit Cone \cite{Ben}]
\label{limitcone}
The \emph{limit cone} of $\Gamma$ is the smallest closed cone $\mathcal{L}_{\Gamma}$ in $\mathfrak{a}^{+}$ containing $\lambda(\Gamma_{\mathrm{lox}})$. In other words, $\mathcal{L}_{\Gamma}$ is the closure of the union of the half-lines spanned by the Jordan projections of the loxodromic elements of $\Gamma$:
\begin{align*}
\mathcal{L}_{\Gamma} := \overline{\bigcup_{g \in \Gamma_{\mathrm{lox}}} \mathbb{R}^{+} \lambda(g)}.
\end{align*}
\end{defn} Note that the word \emph{cone} does not presuppose that $\mathcal{L}_{\Gamma}$ is convex, nor that it has non-empty interior. When $\Gamma$ is Zariski dense, these properties do indeed hold, this being a deep theorem of Benoist (Theorem 1.2 of \cite{Ben}). Thus Benoist's theorem and Theorem \ref{mainthm2} immediately imply the following:
\begin{cor}
\label{limitconecor}
Let $G$ and $\epsilon = \epsilon(G) > 0$ be as in Theorem \ref{mainthm2}. If $\Gamma < G$ is a discrete subgroup such that $\delta(\Gamma) > h_{\mathrm{vol}}(X) - \epsilon$, then the limit cone $\mathcal{L}_{\Gamma}$ is a closed convex cone of non-empty interior.
\end{cor}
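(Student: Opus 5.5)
The plan is to combine Theorem \ref{mainthm2} with Benoist's theorem on limit cones (Theorem 1.2 of \cite{Ben}) directly. Fix $\epsilon = \epsilon(G) > 0$ as produced by Theorem \ref{mainthm2}, namely $\epsilon = h_{\mathrm{vol}}(X) - M$ with $M$ the maximum defined in its proof. Let $\Gamma < G$ be discrete with $\delta(\Gamma) > h_{\mathrm{vol}}(X) - \epsilon$.

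The first step is to apply Theorem \ref{mainthm2} to conclude that $\Gamma$ is Zariski dense in $G$. The second step is to invoke Benoist's theorem. It asserts that for a Zariski dense subgroup of a connected real semisimple linear algebraic group, the limit cone $\mathcal{L}_{\Gamma}$ is convex with non-empty interior. Closedness is automatic from Definition \ref{limitcone}, since $\mathcal{L}_{\Gamma}$ is defined as a closure.

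The only point requiring care is checking that our standing hypotheses match those of \cite{Ben}. Benoist's theorem is stated for Zariski dense subsemigroups (in particular subgroups) of $G$, where $G$ is the group of real points of a connected reductive, here semisimple, algebraic group. If one had reduced to $Z(G) = \{\mathrm{id}\}$, the finite center would pose no problem. Jordan projections are unchanged under passing to $G/Z(G)$, because $Z(G)$ is finite and hence $\kappa$ changes by a bounded amount. Zariski density is also preserved under the quotient.

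So no real obstacle arises: the corollary is a formal consequence of the two theorems, and the proof is two lines once these compatibility remarks are made.
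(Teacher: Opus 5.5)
Your proposal is correct and follows the paper exactly: Theorem \ref{mainthm2} gives Zariski density, Theorem 1.2 of \cite{Ben} gives convexity and non-empty interior, and closedness is built into Definition \ref{limitcone}. Your center-reduction remarks are harmless but unnecessary, since Benoist's theorem applies to $G$ directly; moreover $Z(G) \subset K$, so $\kappa$ is literally unchanged rather than merely changed by a bounded amount.
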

Informally, this says that knowing that a discrete subgroup has sufficiently large growth in a \emph{single} direction in the positive Weyl chamber (namely, the direction corresponding to $\delta(\Gamma)$) is enough to conclude point-set topological information about all the directions in the positive Weyl chamber which are asymptotically approached by the $\Gamma$-orbits in the symmetric space $X$.
\par 
To conclude this section, we give a more quantitative version of Theorem \ref{mainthm2} in the case when $G$ has real rank one. For algebraic groups $G_{1}$ and $G_{2}$, we will write $G_{1} \simeq G_{2}$ to mean that $G_{1}$ and $G_{2}$ are \emph{locally isomorphic}, that is, they have isomorphic Lie algebras. 
\begin{thm}
\label{realrankoneversion}
Let $G$ be a connected noncompact simple real algebraic group of real rank one and let $\Gamma < G$ be a discrete subgroup. Then $\Gamma$ is Zariski dense in $G$ provided:
\begin{itemize}
    \item[(a)] $G \simeq \mathrm{SO}(n,1)$ and $\delta(\Gamma) > n-2$.
    \item[(b)] $G \simeq \mathrm{SU}(n,1)$ and $\delta(\Gamma) > 2n-2$.
    \item[(c)] $G \simeq \mathrm{Sp}(n,1)$ and $\delta(\Gamma) > 4n-2$.
    \item[(d)] $G \simeq \mathrm{F}_{4}^{-20}$ and $\delta(\Gamma) > 11$.
\end{itemize}
\end{thm}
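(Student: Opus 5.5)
Following the proof of Theorem \ref{mainthm2}, we show that a non-Zariski-dense discrete $\Gamma$ has $\delta(\Gamma)$ at most the stated thresholds. In real rank one $h_{\mathrm{vol}}(X) = 2\rho$ (identifying $\mathfrak{a}^{*}$ with $\mathbb{R}$ via the unit vector of $\mathfrak{a}^{+}$). Concretely, $h_{\mathrm{vol}}(X) = n-1$, $2n$, $4n+2$, $22$ for $\mathbb{H}^n_{\mathbb{R}}$, $\mathbb{H}^n_{\mathbb{C}}$, $\mathbb{H}^n_{\mathbb{H}}$, $\mathbb{H}^2_{\mathbb{O}}$ respectively, with the normalization making the sectional curvature lie in $[-4,-1]$ in the non-real cases. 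We treat the two cases of that proof separately.

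\textbf{Case 1 (reductive $H$).} Here $\Gamma$ preserves a proper totally geodesic subspace $Y \subsetneq X$, and $\delta(\Gamma) \leq h_{\mathrm{vol}}(Y)$. We use the classification of maximal totally geodesic subspaces of the rank one symmetric spaces:
\begin{itemize}
\item in $\mathbb{H}^n_{\mathbb{R}}$ they are copies of $\mathbb{H}^{n-1}_{\mathbb{R}}$, with entropy $n-2$;
\item in $\mathbb{H}^n_{\mathbb{C}}$ they are $\mathbb{H}^{n-1}_{\mathbb{C}}$ (entropy $2n-2$) and $\mathbb{H}^n_{\mathbb{R}}$ (entropy $n-1$, in curvature $-1$);
\item in $\mathbb{H}^n_{\mathbb{H}}$ they are $\mathbb{H}^{n-1}_{\mathbb{H}}$ (entropy $4n-2$) and $\mathbb{H}^n_{\mathbb{C}}$ (entropy $2n$);
\item in $\mathbb{H}^2_{\mathbb{O}}$ they are $\mathbb{H}^2_{\mathbb{H}}$ (entropy $10$) and $\mathbb{H}^8_{\mathbb{R}}$, a sphere-type slice of curvature $-1$ with entropy $7$ (if it occurs in the list).
\end{itemize}
For $n \geq 1$ we have $2n \leq 4n-2$, so each maximum is at most $n-2$, $2n-2$, $4n-2$, $11$ respectively. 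Small cases such as $n=1$ must be checked separately.

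\textbf{Case 2 ($H$ inside a proper parabolic).} In rank one the only proper parabolic subgroups are the minimal ones, so $H \subset Q = gPg^{-1}$. After conjugating (Lemma \ref{invariantunderconjugation}), Corollary \ref{comparisonofcritexp} gives $\delta_{G/\Gamma} \leq \delta_{G/P}$. By Fact \ref{tempforminparabolic}, $L^2(G/P)$ is tempered, so by (\ref{equiv3}) we get $\delta_{G/P} \leq 1/2$. Then (\ref{realrankonerelations}) yields $\delta(\Gamma) \leq h_{\mathrm{vol}}(X)/2$, which equals $(n-1)/2$, $n$, $2n+1$, $11$. Each is at most the corresponding threshold: $(n-1)/2 \leq n-2$ for $n \geq 3$, $n \leq 2n-2$ for $n \geq 2$, $2n+1 \leq 4n-2$ for $n \geq 2$, and $11 \leq 11$. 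This is exactly where $F_4^{-20}$ attains $11$, so the rank-one parabolic bound is sharp there. The low-dimensional exceptions, e.g.\ $\mathrm{SO}(2,1)$ where $n-2=0$, need a direct look. For $\mathrm{SO}(2,1)$, a discrete subgroup of $P$ or of a compact or one-dimensional reductive group is elementary with $\delta = 0$; there a better bound follows because discrete subgroups of $P = MAN$ are virtually contained in $MN$ or cyclic loxodromic, giving $\delta(\Gamma) = 0$ or $\delta(\Gamma) = $ half the rank of $N$. I would handle these small cases by hand.

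Combining the two cases gives the theorem. The main obstacle is Case 1: the classification of totally geodesic submanifolds, with the correct entropies under a consistent normalization, especially for the octonionic plane and for the real and complex forms sitting inside $\mathbb{H}^n_{\mathbb{C}}$ and $\mathbb{H}^n_{\mathbb{H}}$. I would cite Wolf's classification of totally geodesic submanifolds of rank one spaces and compute $h_{\mathrm{vol}}$ of each as $2\rho$ of the subgroup, restricted to the common $\mathfrak{a}$ after checking that $\mathfrak{a}$ can be chosen inside the subalgebra.
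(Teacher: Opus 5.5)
Your two-case structure is the same as the paper's: its proof is the rank-one specialization of Theorem \ref{mainthm2}, with temperedness of $L^2(G/P)$ giving $\delta(\Gamma)\le h_{\mathrm{vol}}(X)/2$ in the parabolic case and the reductive case bounded by the largest entropy of a proper totally geodesic subspace. Your Case 2 is correct. \textbf{The gap is in the Case 1 numerics, and it cannot be closed, because the statement is false exactly there.}

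For $\mathrm{F}_4^{-20}$, the $\mathbb{H}^8_{\mathbb{R}}$ in $\mathbb{H}^2_{\mathbb{O}}$ is an octonionic line. Like complex lines in $\mathbb{H}^n_{\mathbb{C}}$ and quaternionic lines in $\mathbb{H}^n_{\mathbb{H}}$, it has constant curvature $-4$, not $-1$. The root check you propose at the end shows this. For a common $\mathfrak{a}$, the root spaces of $\mathfrak{spin}(8,1)$ (one root, multiplicity $7$) cannot lie in $\mathfrak{g}_{\pm\alpha}$: they would then give a $7$-dimensional abelian subspace of $\mathfrak{g}_\alpha$, whereas any two linearly independent vectors of $\mathfrak{g}_\alpha$ bracket nontrivially into $\mathfrak{g}_{2\alpha}$. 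So they are $\mathfrak{g}_{\pm 2\alpha}$, giving $2\rho_H=7\cdot 2\alpha$ and $h_{\mathrm{vol}}=14$. This is consistent with your own formula $h_{\mathrm{vol}}(\mathbb{H}^n_{\mathbb{H}})=4n+2$, which at $n=1$ gives $6$, twice the entropy of $\mathbb{H}^4_{\mathbb{R}}$ in curvature $-1$.

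Hence a lattice in $\mathrm{Spin}(8,1)<\mathrm{F}_4^{-20}$ has $\delta(\Gamma)=14>11$ and is not Zariski dense. So (d) is false as stated, and the method only yields the threshold $14$, which is then sharp. The paper has the same slip: its subsequent remark takes $\max\{h_{\mathrm{vol}}(\mathbb{H}^8_{\mathbb{R}}),h_{\mathrm{vol}}(\mathbb{H}^2_{\mathbb{H}})\}=10$, i.e.\ treats the octonionic line as having curvature $-1$.

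The low-dimensional cases are likewise not something to ``handle by hand''; they are counterexamples.
\begin{itemize}
\item In $\mathrm{SO}(2,1)$, the group generated by $u=\bigl(\begin{smallmatrix}1&1\\0&1\end{smallmatrix}\bigr)$ (in $\mathrm{PSL}_2(\mathbb{R})$) has $d(o,u^k o)\sim 2\log|k|$. So $\delta=1/2>0=n-2$, yet its Zariski closure is $N$. Your assertion that discrete subgroups of $P$ have $\delta=0$ there is wrong, and your own ``half the rank of $N$'' gives exactly this $1/2$.
\item The corresponding unipotent group in $\mathrm{SU}(1,1)$ (curvature $-4$) has $\delta=1>2n-2=0$.
\item For $\mathrm{Sp}(1,1)$, $\mathbb{H}^1_{\mathbb{H}}$ is $\mathbb{H}^4_{\mathbb{R}}$ of curvature $-4$. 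Its totally geodesic $\mathbb{H}^3_{\mathbb{R}}$, absent from your list (which is correct only for $n\ge 2$), has entropy $4$. Lattices in $N\cong\mathbb{R}^3$ have $\delta=3$. Both exceed $4n-2=2$.
\end{itemize}

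What your argument, and the paper's, actually proves is (a) for $n\ge 3$, (b) and (c) for $n\ge 2$, and (d) with $14$ in place of $11$. These restrictions have to be added to the statement rather than verified separately.
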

\begin{proof}
The argument is the same as that of Theorem \ref{mainthm2}, but it simplifies considerably because, as $G$ has real rank one, there is a single conjugacy class of parabolic subgroups, namely that of the standard minimal parabolic subgroup $P < G$. As discussed in Fact \ref{tempforminparabolic}, the quasi-regular representation $\lambda_{G/P}$ is always tempered in this case, hence $\delta_{G/P} \leq \frac{1}{2}$. Thus if $\Gamma < G$ is a discrete subgroup which is not Zariski dense, its critical exponent is bounded above by the maximum of $h_{\mathrm{vol}}(X)/2$ and $\max_{Y \subsetneq X} h_{\mathrm{vol}}(Y)$, where $Y$ ranges over all proper totally geodesic Riemannian symmetric subspaces of $X$. The result then follows.
\end{proof}
\begin{rem}
We remark that each of the lower bounds in items (a), (b), and (c) is optimal. Indeed, in item (a) for instance, consider a Lie subgroup $H$ of $G \simeq \mathrm{SO}(n,1)$ which is isomorphic to SO$(n-1,1)$, and consider some lattice $\Gamma$ in $H$. Then $\Gamma$ is a discrete subgroup of $G$ which is not Zariski dense and has critical exponent $\delta(\Gamma) = h_{\mathrm{vol}}(\mathbb{H}_{\mathbb{R}}^{n-1}) = n-2$. Similar arguments work in cases $(b)$ and $(c)$.
\par 
However, I do not know whether the lower bound in item (d) is optimal. Indeed, any proper totally geodesic Riemannian symmetric subspace of $\mathbb{H}_{\mathbb{O}}^2$ is isometric to either $\mathbb{H}_{\mathbb{R}}^{8}$ or to $\mathbb{H}_{\mathbb{H}}^{2}$ (see section 6 of \cite{Cor}), whence 
\begin{align*}
\max_{Y \subsetneq \mathbb{H}_{\mathbb{O}}^{2}} h_{\mathrm{vol}}(Y) = \max \Big \{h_{\mathrm{vol}}(\mathbb{H}_{\mathbb{R}}^{8}), h_{\mathrm{vol}}(\mathbb{H}_{\mathbb{H}}^{2}) \Big \} = 10,
\end{align*} while $h_{\mathrm{vol}}(\mathbb{H}_{\mathbb{O}}^2)/2 = 11$. Thus asking whether the lower bound of $11$ for $\delta(\Gamma)$ is optimal is equivalent to asking whether there are discrete subgroups of the minimal parabolic subgroup $P$ of $\mathrm{F}_{4}^{-20}$ whose critical exponents are arbitrarily close (or equal) to $11$. I am unaware of the answer to this question.
\end{rem}
We conclude with a discussion of the relationship between the above result and Corlette's gap theorem. We first recall the statement of Corlette's result.
\begin{thm} [Corlette, Theorem 4.4 of \cite{Cor}]
\label{CorletteGapThm} ~\
\begin{itemize}
    \item[(1)] If $\Gamma < \mathrm{Sp}(n,1)$, $n \geq 2$, is a discrete subgroup, then $\delta(\Gamma) = 4n+2$ or $\delta(\Gamma) \leq 4n$. Moreover, $\delta(\Gamma) = 4n+2$ if and only if $\Gamma$ is a lattice.
    \item[(2)] If $\Gamma < \mathrm{F}_{4}^{-20}$ is a discrete subgroup, then $\delta(\Gamma) = 22$ or $\delta(\Gamma) \leq 16$. Moreover, $\delta(\Gamma) = 22$ if and only if $\Gamma$ is a lattice. 
\end{itemize}
\end{thm}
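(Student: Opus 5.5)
The plan is to translate the critical exponent into spectral data on $\Gamma \backslash X$ and then use the isolation properties of the spherical unitary dual of $\mathrm{Sp}(n,1)$ and $\mathrm{F}_{4}^{-20}$. Write $\rho$ also for the number $\rho(H)$, where $H$ is the unit vector in $\mathfrak{a}^{+}$. Then $2\rho = h_{\mathrm{vol}}(X)$, with $\rho = 2n+1$ for $\mathrm{Sp}(n,1)$ and $\rho = 11$ for $\mathrm{F}_{4}^{-20}$. The first ingredient is the generalization of Sullivan's theorem to rank one symmetric spaces. It says that the bottom $\lambda_{0}(\Gamma\backslash X)$ of the $L^2$-spectrum of the Laplacian on $\Gamma \backslash X$ satisfies
\begin{align*}
\lambda_{0}(\Gamma \backslash X) = \delta(\Gamma)\big(2\rho - \delta(\Gamma)\big) \ \ \mathrm{if} \ \ \delta(\Gamma) \geq \rho, \qquad \lambda_{0}(\Gamma\backslash X) = \rho^{2} \ \ \mathrm{if} \ \ \delta(\Gamma) \leq \rho.
\end{align*}
I would prove this as follows. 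The Patterson--Sullivan density of dimension $\delta(\Gamma)$ on $\partial X$ is pushed through the Poisson kernel $e^{-\delta \beta_{\xi}(x)}$. This produces a positive $\Gamma$-invariant eigenfunction of the Laplacian with eigenvalue $\delta(2\rho - \delta)$, and a positive eigenfunction bounds $\lambda_{0}$ from below by Barta's argument. For the reverse inequality, one either cuts off this eigenfunction to get test functions, or uses Green's function and Poincar\'e series convergence. Only the case $\delta > \rho$ is needed below, since otherwise $\delta \leq \rho \leq 4n$ (resp. $\leq 11 \leq 16$) already.

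Next I would pass to representations. A spherical function $\varphi_{s}$ with Laplace eigenvalue $\rho^{2} - s^{2}$, where $s \in [0,\rho]$ is real, corresponds to the spherical principal or complementary series parameter $s$. The bottom of the spectrum being $\lambda_{0}$ means that $\varphi_{s_{0}}$ with $s_{0} = \sqrt{\rho^{2} - \lambda_{0}}$ is a limit, uniformly on compacts, of diagonal $K$-invariant matrix coefficients of $L^{2}(\Gamma\backslash G)$. This is standard from the spectral theorem applied to the Laplacian on $L^2(\Gamma\backslash G)^{K}$. So the spherical representation $\pi_{s_{0}}$ is weakly contained in $L^{2}(\Gamma \backslash G)$, and in particular it is unitary, i.e.\ it lies in the closure of the spherical unitary dual. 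When $\delta > \rho$, the parameter is $s_{0} = \delta - \rho$.

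The key input, and the step I expect to be the main obstacle, is Kostant's determination of the spherical unitary dual. For $\mathrm{Sp}(n,1)$ the unitary spherical parameters are exactly $s \in [0, \rho - 2] \cup \{\rho\}$. For $\mathrm{F}_{4}^{-20}$ they are $s \in [0, \rho - 6] \cup \{\rho\}$, i.e.\ $[0,5] \cup \{11\}$. I would take this from Kostant rather than reprove it; reproving it would require analyzing positivity of the intertwining-operator Hermitian form on the $K$-types. This set is closed, so weak containment forces $\delta - \rho \in [0,\rho-2]\cup\{\rho\}$, resp.\ $[0,5]\cup\{11\}$. For $\mathrm{Sp}(n,1)$ this gives $\delta(\Gamma) = 4n+2$ or $\delta(\Gamma) \leq 2\rho - 2 = 4n$. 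For $\mathrm{F}_{4}^{-20}$ it gives $\delta(\Gamma) = 22$ or $\delta(\Gamma) \leq 16$.

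For the characterization of lattices, the identity $\delta = h_{\mathrm{vol}}(X)$ for lattices is recalled in the introduction \cite{Leu}. Conversely, suppose $\delta(\Gamma) = 2\rho$. Then $s_{0} = \rho$, so the trivial representation is weakly contained in $L^{2}(\Gamma\backslash G)$. Both groups have property (T), so the trivial representation is then actually contained in $L^2(\Gamma\backslash G)$. Hence there is a nonzero $G$-invariant $L^{2}$ function on $\Gamma \backslash G$, which must be constant, so $\Gamma\backslash G$ has finite volume and $\Gamma$ is a lattice.
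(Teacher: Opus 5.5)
The paper gives no proof of this statement; it simply quotes it from Corlette. Your argument is correct and is essentially Corlette's own proof. It combines four ingredients: the rank-one Elstrodt--Patterson--Sullivan formula for $\lambda_0(\Gamma\backslash X)$; weak containment in $L^2(\Gamma\backslash G)$ of the spherical representation with parameter $\sqrt{\rho^2-\lambda_0}$; Kostant's description of the spherical unitary dual of $\mathrm{Sp}(n,1)$ ($n\geq 2$) and $\mathrm{F}_4^{-20}$; and property (T) for the lattice characterization.

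One point is worth tightening. The only half of the Sullivan formula you actually use is $\lambda_0\leq\delta(2\rho-\delta)$. It already gives $s_0\geq\delta-\rho$, which suffices for the case analysis, and if $s_0=\rho$ property (T) forces a lattice. The Patterson--Sullivan/Barta lower bound, which you present as the main argument, plays no role.

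The needed half should come from the Green's function argument. For $\lambda<\lambda_0$, the series $\sum_{\gamma\in\Gamma}g_\lambda(x,\gamma y)$ is the Green's function of the quotient and therefore converges. Its terms have size $e^{-(\rho+\sqrt{\rho^2-\lambda})d(x,\gamma y)}$, so convergence forces $\rho+\sqrt{\rho^2-\lambda}\geq\delta$. The alternative you mention, cutting off the Patterson--Sullivan eigenfunction, does not work for arbitrary $\Gamma$: that function is not square-integrable in general, and the cut-off needs growth control you do not have without extra hypotheses such as geometric finiteness.
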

Corlette also showed that there exist infinite covolume discrete subgroups of Sp$(n,1)$ and $\mathrm{F}_{4}^{-20}$ whose critical exponents are greater than $4n-2$ and $10$, respectively (see Theorem $6.1$ of \cite{Cor}). Thus Theorem \ref{realrankoneversion} implies that these non-lattice discrete subgroups of Sp$(n,1)$ constructed by Corlette are in fact Zariski dense, while again the case of $\mathrm{F}_{4}^{-20}$ is more mysterious.

\end{document}